\documentclass[12pt]{amsart}

\usepackage{graphicx}
\usepackage[alphabetic]{amsrefs}
\usepackage[all]{xy}
\theoremstyle{definition}

\newtheorem{thm}{Theorem}
\newtheorem{defn}[thm]{Definition}

\newtheorem{lem}[thm]{Lemma}
\newtheorem{cor}[thm]{Corollary}
\newtheorem{exa}[thm]{Example}

\title{Series Parallel Linkages}

\author{James Cruickshank}
\email{james.cruickshank@nuigalway.ie}
\address{School of Mathematics,
Statistics and Applied Mathematics, National University of Ireland,
Galway, Ireland}

\author{Jonathan McLaughlin}
\email{DL92@o2.ie}
\address{School of Mathematics, Statistics and Applied Mathematics,
National University of Ireland, Galway, Ireland}

\keywords{linkage, series parallel graph, realisation, configuration
space, moduli space} \subjclass[2000]{Primary: 55R80; Secondary:
51-XX}

\date{\today}

\begin{document}

\begin{abstract}
We study spaces of realisations of linkages (weighted graphs) whose
underlying graph is a series parallel graph. In particular, we
describe an algorithm for determining whether or not such spaces are
connected.
\end{abstract}
\maketitle

\section{Introduction}

Let $G$ be a graph with vertex set $V$ and edge set $E$. Let $l:E
\rightarrow \mathbb R^{\geq 0}$ (where $\mathbb R^{\geq 0}$ denotes
the nonnegative real numbers). We will call $l$ a length function.
We will call such a pair, $(G,l)$, a linkage. Note that this is not
standard terminology. However, it seems appropriate in the given
context to formalise the intuition that a weighted graph is the
mathematical model for a mechanical linkage consisting of hinges and
bars that are constrained to move in a plane (we ignore the issue of
self intersections).

Given such a linkage
$L=(G,l)$ we define the space of planar configurations of $L$ as
follows: \[ C(L) = C(G,l) := \{p:V \rightarrow \mathbb R^2: |p(u)
-p(v)| = l(\{u,v\}) \forall \{u,v\} \in E\}\] where $|p(u)-p(v)|$
denotes the standard Euclidean distance between $p(u)$ and $p(v)$.
By definition, $C(L)$ is a subset of $\mathbb R^{2|V|}$ and thus
inherits a natural metric space structure. Observe that there is a
canonical action of the group  of orientation preserving isometries
of the plane on $C(L)$. We define the the moduli space of the
linkage, denoted by $M(L)$ or $M(G,l)$, to be the orbit space of
this action. It is easy to see that if $G$ is connected then $M(L)$
is a compact real algebraic variety. In general, it is difficult to
decide whether or not $M(L)$ is even nonempty. An element of $M(L)$
is called a realisation of the linkage $L$.

The problem of finding a realisation of $L$ is known as the molecule
problem - see \cite{MR1358807}. In the case where $M(L)$ is
nonempty, it is difficult to say much about the topology of this
space without imposing some restrictions on the structure of the
underlying graph $G$. The case where $G$ is a polygonal graph (i.e.
connected with every vertex of degree two) is quite well understood
and much is known about the topology of $M(L)$ in this case. For
example, we have the following (see Theorem 1.6 of \cite{MR2076003},
for example).

\begin{thm}
\label{polygonal} If $G$ is a polygonal graph, then
$M(L)$ is nonempty if and only if the longest edge has length at
most half the total length of all the edges. Moreover $M(L)$ is
connected if and only if the sum of the lengths of the second and third longest
edges is at most half of the total length of all the edges.
\end{thm}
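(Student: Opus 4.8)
The plan is to work with the standard reformulation of a polygonal linkage as a closed chain of edge vectors. Since $G$ is a cycle, writing its oriented edges as $v_1,\dots,v_n\in\mathbb R^2$ with $|v_i|=l_i$, a configuration is exactly a choice of the $v_i$ satisfying the closure condition $\sum_i v_i=0$, and $M(L)$ is the set of such choices modulo the rotation group $SO(2)$ (translations having been quotiented out already by passing to edge vectors). For nonemptiness I would invoke the elementary fact that the set of attainable values of $|w_1+\cdots+w_k|$, as $w_j$ ranges over vectors of fixed lengths $a_1\ge\cdots\ge a_k$, is exactly the interval $[\max(0,2a_1-\sum_j a_j),\ \sum_j a_j]$. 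Applying this with the longest edge removed, closure forces $l_1=|{-}v_1|=|\sum_{j\ne 1}v_j|$ to lie in this interval for the lengths $l_2,\dots,l_n$; the upper endpoint condition $l_1\le\sum_{j\ne1}l_j$ is exactly ``the longest edge has length at most half the total,'' and since $2l_2-\sum_{j\ne1}l_j\le l_2\le l_1$ the lower endpoint is automatically $\le l_1$. This yields both directions of the first assertion.

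For the connectivity statement I would pin down the rotational freedom by fixing $v_1=(-l_1,0)$ (legitimate once $l_1>0$; the case $l_1=0$ is trivial), identifying $M(L)$ with the space of configurations of the open chain with link lengths $l_2,\dots,l_n$ whose free endpoint is pinned at $(l_1,0)$. The organizing principle is that the signed-area function $A$ is continuous and changes sign under the orientation-reversing involution, whose fixed locus is precisely the collinear configurations; hence any path joining a configuration with $A>0$ to one with $A<0$ must pass through the collinear locus $A=0$. Abbreviating $R:=\sum_{i\ne 2,3}l_i$, the stated hypothesis reads $l_2+l_3\le R$. For sufficiency ($l_2+l_3\le R$) the strategy is to join every configuration by a path to a single canonical collinear configuration. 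The key point is that when $l_2+l_3\le R$ the two longest edges $v_2,v_3$ can be swung freely: the remaining edges have total length $\ge l_2+l_3$, so by the interval fact they can always absorb the slack needed to maintain closure while $v_2$ and $v_3$ rotate, in particular allowing the chain to be deformed through the collinear configurations that identify the two orientation classes. I would then straighten the edges one at a time, repeatedly forming a triangle from one edge and a flexible complementary subchain and flexing it until that edge lies on the axis, arriving at the canonical form; connectivity follows.

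For necessity ($l_2+l_3>R$) the plan is to produce a locked sign invariant. Because the two longest edges together exceed the total length of all the others, the complementary subchains joining their endpoints are too short ever to let $v_2$ and $v_3$ become parallel and equidirected; from this one extracts a continuous function on $M(L)$ — the orientation of a suitably chosen triangle built from the long edges, i.e.\ the sign of a determinant — whose vanishing locus is unreachable, so that it is constant on each connected component while taking both signs on explicit configurations. This exhibits at least two components, as already seen in the triangle case $n=3$, where $M(L)$ is two points distinguished exactly by orientation.

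The hardest part is the reachability bookkeeping underlying sufficiency: one must verify that swinging the long edges and then straightening the rest onto the axis can be carried out continuously without ever breaking closure, which amounts to controlling the workspace of the complementary subchains throughout the motion and handling the degenerate cases — ties among the longest lengths, adjacent versus non-adjacent long edges, and zero-length edges — uniformly. The necessity direction conceals a smaller but genuine difficulty, namely choosing the determinant invariant so that its vanishing locus is provably unreachable in all of these cases and not merely in the generic one; a careful check shows that the naive choice ``orientation of the triangle on the two longest edges'' can fail, so the invariant must be selected with the arc lengths $R$ in mind.
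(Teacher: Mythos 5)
A preliminary remark: the paper does not prove this theorem at all; it is quoted as a known result, with a pointer to Theorem 1.6 of the Milgram--Trinkle reference, so there is no in-paper argument to compare yours against and your proposal must stand on its own. The first assertion does stand: passing to edge vectors, applying the workspace interval $[\max\{0,2a_1-\sum_j a_j\},\ \sum_j a_j]$ to the open chain obtained by deleting the longest edge, and observing that the lower endpoint is automatically at most $l_1$ correctly reduces nonemptiness to $l_1\le\sum_{j\ne 1}l_j$, which is the stated half-perimeter condition.

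The connectivity assertion, however, is only sketched, and the two difficulties you flag at the end are not loose ends but the actual content of the proof. For necessity, the invariant is never constructed, and no invariant built from \emph{two} of the long edges can work: the vanishing locus of $\det(v_2,v_3)$ contains the configurations where $v_2$ and $v_3$ are antiparallel, and those are reachable in general. The invariant that works uses all three longest edges. Writing $S=\sum_i l_i$, if $l_2+l_3>S/2$ then every pair $\{i,j\}\subset\{1,2,3\}$ satisfies $l_i+l_j>S/2$, and for such a pair $v_i$ and $v_j$ can never be equidirected, since closure would give $l_i+l_j=\bigl|\sum_{k\ne i,j}v_k\bigr|\le S-l_i-l_j<l_i+l_j$. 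Hence the cyclic order of the three directions $v_1/l_1$, $v_2/l_2$, $v_3/l_3$ on the circle is well defined and locally constant; one checks separately that no collinear configuration can exist under this hypothesis, so reflection reverses the cyclic order and both values are attained. That is the argument your ``suitably chosen determinant'' is standing in for, and it is not a refinement of the two-edge determinant but a different invariant. For sufficiency, the lemma that every configuration can be straightened edge by edge to a single canonical configuration whenever $l_2+l_3\le S/2$ is exactly the ``reachability bookkeeping'' you defer; without a precise statement controlling the workspace of the complementary subchain at each stage of the straightening, nothing has been proved in that direction. Finally, your ``organizing principle'' is misstated: a path from signed area $A>0$ to $A<0$ must cross $A=0$, but the set $A=0$ is strictly larger than the collinear locus (self-intersecting polygons can bound zero signed area), so crossing $A=0$ does not force passage through the fixed locus of the reflection involution.
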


Indeed, much more detailed information about the topology of $M(L)$
is available when $G$ is polygonal. The homotopical and homological
properties of these spaces are well understood - see
\cite{MR1133898}, \cite{MR1614965} or \cite{MR2076003}, for example.
For an overview of some of the theory of polygonal linkages, we
refer the reader to \cite{MR2455573}.

Our purpose in this paper is to study $M(L)$ where $G$ is a series
parallel graph (see below for definitions). We will show that it is
possible to easily determine, for a given series parallel graph $G$
and weight function $l$, whether or not $M(L)$ is nonempty and, in
the case when it is nonempty, whether or not it is connected. The
problem of deciding whether the space is connected or not is
connected with the motion planning problem in robotics. The motion
planning problem is concerned with the existence of a path betweeb
two configurations of a robot. If we think of our linkages as a
model for mechanical linkages, then the motion planning problem for
this particular type of ``robot" is equivalent to finding a
continuous path in $M(L)$ with specified endpoints.

We will show that for the class of series parallel graphs these
problems can be answered by considering a finite system of linear
inequalities in the edge lengths. In contrast, we note that even for
the complete graph on four edges, the smallest 2-connected graph
which is not series parallel, it is necessary to solve a polynomial
equation of total degree 6 (quartic in each variable) in the edge
lengths to determine whether or not a realisation exists.

Throughout this paper we adopt the convention that  $L=(G,l)$ and that $L_i=(G_i,l_i)$.

\section{Series Parallel Graphs}

In this section, we review the basic constructions and facts
concerning the class of series parallel graphs. First, we fix some
conventions regarding some standard graph theory. A graph is a pair
$(V,E)$ where $E$ is a multiset of unordered pairs of distinct
elements of $V$. Thus, in particular multiple edges with the same
endpoints are allowed. However loops are not allowed. A path graph
is a graph isomorphic to a graph with vertex set $\{1,\dots,n\}$
and edge set $\{ \{i,i+1\}:i=1,\dots,n-1\}$. A path linkage is a
linkage $(P,l)$ where $P$ is a path graph. A polygonal graph is a
graph isomorphic to a graph with vertex set $\{1,\dots,n\}$ and edge
set $\{ \{i,i+1\}:i=1,\dots,n-1\}\cup\{\{n,1\}\}$. A polygonal
linkage is a linkage $(G,l)$ where $G$ is a polygonal graph.

A two terminal graph (TTG) is an ordered triple $(G,s,t)$ where $s$
and $t$ are distinct vertices of $G$ called the source and the sink,
respectively. Collectively $s$ and $t$ are called the terminal
vertices of the TTG. Given  TTGs $(G_1,s_1,t_1)$ and $(G_2,s_2,t_2)$
we can define the series composition $(G_1,s_1,t_1)\circ
(G_2,s_2,t_2)$ to be the TTG \[(G_1\cup_{t_2\sim s_1}G_2,s_2,t_1)\]
where $G_1\cup_{t_2\sim s_1}G_2$ denotes the graph obtained by
identifying the vertices $t_2$ and $s_1$. Also we define the
parallel composition $(G_1,s_1,t_1)\| (G_2,s_2,t_2)$ to be the TTG
\[ (G_1 \cup_{s_1\sim s_2, t_1\sim t_2} G_2, s_1,t_1).\] See Figure
\ref{SPcomposition} for an illustration of these constructions.
Observe that the operation of parallel composition is a commutative
associative operation on the class of TTGs. Thus, in particular,
given TTGs $(G_i,s_i,t_i)$ for $i=1,\dots,n$ we can unambiguously
refer to the parallel composition
\[(G_1,s_1,t_1)\| \dots \|(G_n,s_n,t_n).\]
\begin{figure}\includegraphics{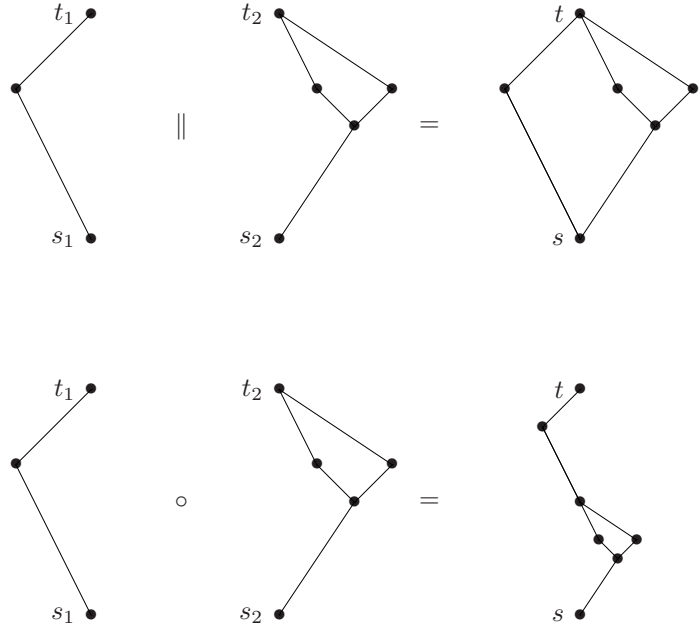}
\caption{\label{SPcomposition} Parallel and series composition of
two terminal graphs.}\end{figure} Let $K_2$ denote the complete
graph with vertex set $\{s,t\}$. We define the class of two terminal
series parallel graphs (TTSPGs) to be the smallest class of TTGs
that contains $(K_2,s,t)$ and that is closed under the operations of
series and parallel composition. A series parallel graph is a graph
$G$ such that $(G,s,t)$ is a TTSPG for some choice of vertices $s$
and $t$. Thus, for example, path graphs are series parallel. Also
polygonal graphs are series parallel, since a polygon is the
parallel composition of two paths. A series parallel linkage is a
linkage $(G,l)$ such that $G$ is a series parallel graph. We note
that the operations of parallel composition and series composition
extend in an obvious way to linkages - so it makes sense to refer to
the parallel composition $(L_1,s_1,t_1)\|(L_2,s_2,t_2)$ or the
series composition $(L_1,s_1,t_1)\circ(L_2,s_2,t_2)$, where $L_1$
and $L_2$ are linkages rather than graphs.

Observe that for a given series parallel graph, there may be many
possible choices of terminal vertices. However, the choice is not
completely arbitrary - some pairs of vertices cannot be the terminal
vertices of a given series parallel graph. For example, the
existence of a subgraph of $G$ homeomorphic to
\[\includegraphics{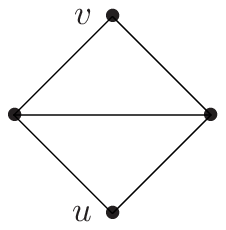}\]  implies that $(G,u,v)$
is not a TTSPG. There are other possible obstructions. For a more
detailed discussion of the possible choices of terminal vertices,
see \cite{MR1161075}.

The following lemma will prove useful for our analysis of the
connectedness of the moduli space of a series parallel linkage.
Recall that a graph $G$ is 2-connected if the complement of any
vertex is connected. Observe that a series parallel graph is
2-connected if and only if it cannot be expressed as a series
composition of proper subgraphs.

\begin{lem} \label{twoconnSPGlemma} Let $G$ be a 2-connected
series parallel graph. There are vertices $s$ and $t$ in $G$ such
that $(G,s,t)$ is a TTSPG and such that \[(G,s,t) = (P_1,s,t)\|
(P_2,s,t)\|(H,s,t),\] where $P_1$ and $P_2$ are paths joining $s$
and $t$ and $H$ is a (possibly empty) subgraph of $G$ such that
$(H,s,t)$ is a TTSPG. \end{lem}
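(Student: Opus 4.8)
My plan is to work with the rooted decomposition tree of a two terminal series parallel graph and to exploit the freedom in the choice of terminal vertices. First I would record the standard fact that every TTSPG $(G,s,t)$ admits a normalised decomposition tree whose internal nodes are alternately labelled ``series'' and ``parallel'', whose leaves are single edges, and in which the subgraph $G_N$ associated to a node $N$ meets the rest of $G$ exactly in the two terminal vertices of $N$. Since $G$ is $2$-connected, by the observation preceding the statement it cannot be expressed as a series composition of proper subgraphs, so for any admissible choice of $s,t$ the root of the tree is a parallel node; in particular at least one parallel node is present.

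The main idea is to locate a \emph{lowest} parallel node and to re-root the decomposition there. Concretely, choose a parallel node $D$ of maximal depth, let $x,y$ be its two terminal vertices, and let $F_1,\dots,F_r$ (with $r\geq 2$) be the subgraphs associated to its children. Because $D$ has maximal depth, no child of $D$ can contain a parallel node: in the normalised tree each child of a parallel node is either a leaf or a series node, and a series node none of whose descendants is a parallel node is simply a series composition of single edges. Hence each $F_i$ is a simple path joining $x$ and $y$.

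Next I would change the terminal vertices to $x$ and $y$. The key structural point is that, by the ``meets the rest only in its terminals'' property above, the vertices $x$ and $y$ separate the subgraph $G_D=F_1\|\dots\|F_r$ from the remainder of $G$; writing $H'$ for the two terminal graph induced on this remainder with terminals $x,y$, one obtains that $(G,x,y)$ is again a TTSPG and that \[(G,x,y)=(F_1,x,y)\|\dots\|(F_r,x,y)\|(H',x,y).\] Taking $s=x$, $t=y$, $P_1=F_1$, $P_2=F_2$ and letting $H$ be the parallel composition $F_3\|\dots\|F_r\|H'$, which is a possibly empty TTSPG on $s,t$, then yields the desired expression.

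The step I expect to be the main obstacle is the justification that re-rooting at $x,y$ is legitimate, that is, that $(G,x,y)$ really is a TTSPG and that the remainder $H'$ is itself a two terminal series parallel graph with terminals $x,y$. This reduces to the claim that $G_D$ shares with its complement only the vertices $x$ and $y$, together with the fact that the complement inherits a series parallel structure; both follow from the standard properties of the decomposition tree, but they must be stated carefully. Finally, the degenerate reading in which $G$ is literally equal to $G_D$ (so that $H'$ is empty), and the trivial case $G=K_2$, should be noted separately.
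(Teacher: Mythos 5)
Your argument is correct, but it reaches the conclusion by a genuinely different route from the paper's. The paper works directly with the graph: it takes a path $P$ whose interior vertices have degree $2$ in $G$, invokes a modification of Eppstein's Lemma 9 to conclude that the endpoints of $P$ may serve as terminals and that $P$ splits off as a parallel factor, and then notes (by an induction on the number of edges) that two such paths with common endpoints always exist in a series parallel graph that is not itself a path. You instead work with the canonical decomposition tree: 2-connectedness forces the root to be a parallel node, a deepest parallel node $D$ has only path children $F_1,\dots,F_r$, and a re-rooting argument moves the terminals to the terminals $x,y$ of $D$. The two key technical steps correspond exactly: your re-rooting claim --- that the complement of $G_D$ is a TTSPG with terminals $x,y$ --- is precisely what the paper extracts from Eppstein's lemma, and it requires the same careful induction up the tree, with 2-connectedness entering to ensure that when one peels a factor out of a series node the two dangling arms are reconnected through the remainder of the graph (this is where the argument would fail if the root were a series node). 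Your version has the advantage of producing the full parallel decomposition $(F_1,x,y)\|\dots\|(F_r,x,y)\|(H',x,y)$ in one step, which is the form actually used later in the paper, and of making the role of 2-connectedness transparent; the paper's version avoids setting up the tree formalism and isolates the geometrically relevant property that the interiors of $P_1$ and $P_2$ meet no other edges of $G$. One caveat shared by both treatments: the case $G=K_2$, which is 2-connected under the paper's definition, must be excluded, since there is only one path joining $s$ and $t$; you rightly flag this, while the paper leaves it implicit.
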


\begin{proof}
Let $P$ be a subgraph of $G$ such that $P$ is a path and such that
every interior vertex of $P$ has degree 2 in $G$ (i.e. no other
edges of $G$ are incident to the interior of $P$). Let $s$ and $t$
be the endpoints of $P$. By an easy modification of the proof of
Lemma 9 in \cite{MR1161075}, we see that $(G,s,t)$ is a TTSPG (note
that the hypothesis of 2-connectedness is necessary at this point)
and thus $(G,s,t) = (P,s,t)\|(K,s,t)$. Here $K$ is the subgraph of
$G$ spanned by all the edges that are not in $P$. Now it also easy
to show (for example, by induction on the number of edges) that in
any series parallel graph that is not itself a path graph, it is
possible to find two distinct path subgraphs $P_1$ and $P_2$ with
common endpoints and such that no other edges of $G$ are incident
with any of the internal vertices of $P_1$ and $P_2$. Applying our
previous observation to $P_1$ and $P_2$ completes the proof of the
lemma.
\end{proof}

Series parallel graphs are a well studied class of graphs (see
\cite{MR849395} and \cite{MR0175809} for example). Of particular
interest to us is the following result of Belk and Connelly (see
\cite{MR2295049}). We say that a graph is $d$-realisable (where $d$
is a positive integer) if given any positive integer $n$ and any
function $f:V \rightarrow \mathbb R^n$, there exists a function $g:V
\rightarrow \mathbb R^d$ such that $|g(u)-g(v)| = |f(u)-f(v)|$ for
all edges $\{u,v\} \in E$. Intuitively, this means that any
embedding of $G$ into some (possibly high dimensional) Euclidean
space can be squashed into $\mathbb R^d$ so that the edge lengths
are preserved.

\begin{thm}\label{connellybeck}
(Belk, Connelly) A graph is 2-realisable if and only if it is a
series parallel graph. \end{thm}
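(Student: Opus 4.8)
The plan is to prove both implications by exploiting the classical characterisation of series parallel graphs as precisely the graphs with no $K_4$ minor (due to Dirac and Duffin), together with the observation that $d$-realisability is a minor-closed property.

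For the forward direction I would prove the contrapositive: if $G$ is not series parallel then it is not 2-realisable. First I would check that 2-realisability is closed under taking minors. Deletion of edges or vertices only removes constraints, so it trivially preserves realisability; for edge contraction, given a map $f'$ on $G/e$ I would lift it to a map $f$ on $G$ that assigns the two endpoints of $e$ the same point of $\mathbb{R}^n$ (so that $e$ has length zero), apply 2-realisability of $G$ to obtain $g$, observe that $g$ must collapse $e$ to a single point, and push $g$ down to the contracted graph. Next I would show that $K_4$ itself is not 2-realisable by placing its four vertices at the vertices of a regular tetrahedron in $\mathbb{R}^3$: since every pair of vertices is an edge, a 2-realisation would require four mutually equidistant points in the plane, which is impossible. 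Combining these, if $G$ has a $K_4$ minor then $G$ cannot be 2-realisable, which is the contrapositive.

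For the converse I would argue by structural induction on the construction of a TTSPG, but with a strengthened inductive hypothesis: I claim that for every TTSPG $(G,s,t)$ and every $f:V \to \mathbb{R}^n$ there is a $g:V \to \mathbb{R}^2$ preserving all edge lengths and additionally satisfying $|g(s)-g(t)| = |f(s)-f(t)|$. The base case $(K_2,s,t)$ is immediate, since $\{s,t\}$ is an edge. For a parallel composition $(G_1,s,t)\|(G_2,s,t)$ the induction gives planar realisations of each factor in which the terminal pair is separated by the common distance $|f(s)-f(t)|$; I then apply a rigid motion of the plane to the second realisation so that its images of $s$ and $t$ coincide with those of the first, and glue.

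The series composition step is where the strengthened hypothesis earns its keep, and I expect it to be the main obstacle. Writing the joint vertex as $v$, the inductive hypothesis supplies planar realisations with $|g(v)-g(t_1)| = |f(v)-f(t_1)| =: b$ and $|g(s_2)-g(v)| = |f(s_2)-f(v)| =: a$. Gluing at $v$ and then rotating one arm about the hinge $v$, the distance $|g(s_2)-g(t_1)|$ sweeps continuously over the full interval $[\,|a-b|,\,a+b\,]$; since the target value $|f(s_2)-f(t_1)|$ lies in this interval by the triangle inequality in $\mathbb{R}^n$, the intermediate value theorem lets me select a hinge angle realising it exactly. This preserves every edge of both factors together with the new terminal distance, closing the induction; specialising to an arbitrary series parallel graph and forgetting the terminal condition then yields 2-realisability.
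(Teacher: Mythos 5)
The paper does not actually prove this theorem: it is imported from Belk and Connelly \cite{MR2295049}, so there is no internal proof to compare against. Taken on its own merits, your argument is essentially the standard proof of that result and is sound. The minor-closedness of 2-realisability (lift a map on $G/e$ to one on $G$ sending both endpoints of $e$ to the same point, so that any 2-realisation is forced to collapse $e$ again and descends to $G/e$) is correct; the regular tetrahedron shows $K_4$ is not 2-realisable; and the structural induction for the converse is complete. In particular, strengthening the inductive hypothesis to prescribe the terminal distance is exactly the right move: the parallel step is then a rigid-motion gluing, and the series step follows from the hinge sweep $[\,|a-b|,a+b\,]$ together with the triangle and reverse triangle inequalities in $\mathbb R^n$.

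One caveat, which is really a defect of the statement as quoted rather than of your proof. Your forward direction rests on the implication ``no $K_4$ minor implies series parallel''. Under this paper's definition (a series parallel graph is one admitting terminals $s,t$ for which $(G,s,t)$ is a TTSPG), that implication is false: an easy induction shows every non-terminal vertex of a TTSPG has degree at least two, so the star $K_{1,3}$ is not series parallel in this sense, yet it has no $K_4$ minor and is plainly 2-realisable (indeed 1-realisable). So the theorem as literally stated fails for $K_{1,3}$, and what your argument actually establishes is the correct form of Belk--Connelly: a graph is 2-realisable if and only if it has no $K_4$ minor, which by the Dirac--Duffin characterisation coincides with ``series parallel'' for 2-connected graphs --- the only case this paper ever uses. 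You should either state the theorem in the $K_4$-minor form or restrict to 2-connected graphs; with that adjustment your proof goes through without change.
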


Of course, knowing that a given graph $G$ is 2-realisable does not
tell us whether or not $(G,l)$ is realisable for a particular length
function $l$, nor does it tell us anything about the topology of
$M(G,l)$. However Theorem \ref{connellybeck} does suggest that the
class of series parallel graphs is an interesting class for which to
study the space $M(G,l)$.

\section{realisability}\label{sec:realisability}

Now suppose that $(G,s,t)$ is a TTSPG graph and that $l$ is a length
function on $G$. Let $L$ be the linkage $(G,l)$. Let  \[ [L,s,t] =
\{ |p(s)-p(t)|: p \in M(L)\}.\] Here
we are abusing notation somewhat by writing $p$ for an element of
$M(P)$, but also using $p$ to denote a particular representative in
$C(P)$ of the orbit under the action of orientation preserving
isometries of $\mathbb R^2$. However, this clearly does not cause
any problems with this definition as the quantity $|p(s)-p(t)|$ is
preserved by this action. We will consistently abuse notation in
this way throughout the remainder of the paper. In other words $[L,s,t]$ is the set
of all possible values of the distance between $p(s)$ and $p(t)$ as
$p$ varies over all realisations in $M(L)$. In the case where $L$ is
a path linkage, there is only one possible choice for the set of
terminal vertices, so we will write $[L]$ for $[L,s,t]$ in this
case.

Note, that $[L,s,t]$ could be empty. Indeed, the linkage is
realisable (i.e. $M(L)$ is nonempty) if and only if $[L,s,t]$ is
nonempty.

We will show that it is possible to easily compute $[L,s,t]$ for a
given TTSPG. Observe that in general it is difficult to compute the
set of possible distances between a pair of points as we vary over
all realisations of a (possibly non series parallel) graph. However for the special situation that
we consider, it is possible.

\begin{lem} Let $L_1 = (G_1,l_1)$ and let $L_2=(G_2,l_2)$ and let
$(G,s,t)=(G_1,s_1,t_1)\|(G_2,s_2,t_2)$. Then \[
[L,s,t] = [L_1,s_1,t_1] \cap [L_2,s_2,t_2]. \]
In particular, $L$ is realisable if and only if $[L_1,s_1,t_1] \cap [L_2,s_2,t_2]$
is nonempty.
\end{lem}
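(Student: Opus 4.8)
The plan is to exploit the fact that a configuration of the parallel composition is the same data as a pair of configurations of the two factors that agree on the terminal vertices. Since $G = G_1 \cup_{s_1 \sim s_2,\, t_1 \sim t_2} G_2$ is obtained by identifying $s_1$ with $s_2$ (to form $s$) and $t_1$ with $t_2$ (to form $t$), and no other vertices are identified, we have $V(G_1) \cap V(G_2) = \{s,t\}$ inside $G$, while the edge multiset of $G$ is the disjoint union of those of $G_1$ and $G_2$. Thus any $p \in C(L)$ restricts to maps $p_i$ on $V(G_i)$, and conversely any pair $(p_1,p_2)$ with $p_1(s_1) = p_2(s_2)$ and $p_1(t_1) = p_2(t_2)$ glues to a well-defined map on $V(G)$. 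I would establish the two inclusions separately.

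For the inclusion $[L,s,t] \subseteq [L_1,s_1,t_1] \cap [L_2,s_2,t_2]$, I would take $d \in [L,s,t]$, choose a representative $p \in C(L)$ with $|p(s)-p(t)| = d$, and restrict $p$ to $V(G_1)$ and to $V(G_2)$. Since every edge constraint of $G_i$ is among the edge constraints of $G$, each restriction $p_i$ lies in $C(L_i)$; and because $p_i(s_i) = p(s)$ and $p_i(t_i) = p(t)$, the terminal distance $|p_i(s_i) - p_i(t_i)|$ equals $d$. Hence $d$ lies in both $[L_1,s_1,t_1]$ and $[L_2,s_2,t_2]$.

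For the reverse inclusion I would start with $d \in [L_1,s_1,t_1] \cap [L_2,s_2,t_2]$ and pick representatives $p_1 \in C(L_1)$ and $p_2 \in C(L_2)$ realising $d$ as the terminal distance in each factor. To glue them I must first arrange that they agree at $s$ and $t$, and this is where the only real subtlety lies: I need an \emph{orientation preserving} isometry $\phi$ of $\mathbb R^2$ with $\phi(p_2(s_2)) = p_1(s_1)$ and $\phi(p_2(t_2)) = p_1(t_1)$. When $d > 0$ the points $p_1(s_1), p_1(t_1)$ are distinct and at the same distance as $p_2(s_2), p_2(t_2)$, so there is a unique orientation preserving isometry --- a rotation followed by a translation, as one sees immediately in complex notation $z \mapsto e^{i\theta}z + c$ --- carrying the ordered pair $(p_2(s_2), p_2(t_2))$ onto $(p_1(s_1), p_1(t_1))$; when $d = 0$ both terminal pairs are coincident points and a translation suffices. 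Replacing $p_2$ by $\phi \circ p_2$ leaves it in $C(L_2)$ and preserves all distances, so I may assume $p_1$ and $p_2$ already agree on $\{s,t\}$. Gluing then produces a well-defined $p \in C(L)$ with $|p(s)-p(t)| = d$, giving $d \in [L,s,t]$. The final assertion about realisability follows immediately, since the nonemptiness of $[L,s,t]$ is by definition equivalent to the realisability of $L$.
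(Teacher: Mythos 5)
Your proof is correct and follows essentially the same route as the paper: restriction to the two factors for one inclusion, and gluing realisations of the factors for the other. The only difference is that you spell out the alignment step (finding the orientation preserving isometry matching the terminal pairs, with the $d=0$ case handled separately) that the paper dismisses with ``clearly''.
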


\begin{proof} If $x \in [L,s,t]$ then there is some $p \in M(L)$
such that $|p(s)-p(t)| = x$. For $i=1,2$, let $p_i = p|_{G_{i}}$.
Now $x=|p_i(s) - p_i(t)|$, so $ x\in [L_i,s,t]$. This shows that
$[L,s,t] \subseteq [L_1,s,t]\cap [L_2,s,t]$. For the other
inclusion, suppose that $x \in [L_1,s,t]\cap [L_2,s,t]$. So, for
$i=1,2$, there are realisations $p_i$ of $L_i$  such that $|p_i(s) -
p_i(t| = x$. Clearly, $p_1$ and $p_2$ together induce a realisation
$p$ of $L$ such that $|p(s)-p(t)|$. \end{proof}

For series compositions, we make the following definition.
\begin{defn}
Given intervals $[a,b]$ and $[c,d]$ with $0\leq a \leq b$ and $0\leq
c \leq d$, define the composition $[a,b] \circ [c,d]$ to be the
interval \[ [\max\{0,c-b,a-d\} ,b+d]\] If we write $\phi$ to denote
the empty interval, then we define $[a,b]\circ \phi := \phi$.
\end{defn}

Observe, for example, that $[a,b] \circ [c,d] = [0,b+d]$ if and only
if $[a,b]\cap[c,d]$ is nonempty.

\begin{lem} Let $L_1=(G_1,l_1)$ and $L_2= (G_2,l_2)$ be linkages such
that $[L_1,s_1,t_1]$ and $[L_2,s_2,t_2]$ are both closed intervals.
Let $(G,s_2,t_1)=(G_1,s_1,t_1)\circ(G_2,s_2,t_2)$.  Then \[
[L,s_2,t_1] = [L_1,s_1,t_1]\circ [L_2,s_2,t_2] \]
\end{lem}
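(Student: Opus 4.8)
The plan is to prove the two set inclusions $[L,s_2,t_1] \subseteq [L_1,s_1,t_1]\circ[L_2,s_2,t_2]$ and the reverse, where the series composition identifies the sink $t_2$ of $G_2$ with the source $s_1$ of $G_1$ at a single vertex, call it $w$. Write $[L_1,s_1,t_1]=[a,b]$ and $[L_2,s_2,t_2]=[c,d]$. Let me think about this carefully.

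Actually, let me reconsider the setup. In a series composition $(G_1,s_1,t_1)\circ(G_2,s_2,t_2) = (G_1\cup_{t_2\sim s_1}G_2, s_2, t_1)$, we identify $t_2$ with $s_1$. So the terminals of the composite are $s_2$ and $t_1$, and there's a "middle" vertex $w = t_2 = s_1$.

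For the forward inclusion: take $x \in [L,s_2,t_1]$, so there's a realisation $p$ with $|p(s_2)-p(t_1)| = x$. Restricting $p$ to $G_1$ gives a realisation of $L_1$, and restricting to $G_2$ gives a realisation of $L_2$. Let $r = |p(s_1)-p(t_1)| \in [a,b]$ (distance across $G_1$, from middle to one end) and $q = |p(s_2)-p(t_2)| \in [c,d]$ (distance across $G_2$). Since $s_1 = t_2 = w$, the three points $p(s_2), p(w), p(t_1)$ satisfy the triangle inequality: $|p(s_2)-p(t_1)| \le |p(s_2)-p(w)| + |p(w)-p(t_1)| = q + r \le b+d$, and also $x \ge |q - r|$ by the reverse triangle inequality. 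So $x \ge |q-r| \ge \max\{0, c-b, a-d\}$ needs checking — since $q \ge c, r \le b$ gives $q - r \ge c - b$, and $r \ge a, q \le d$ gives $r - q \ge a-d$, so $|q-r| \ge \max\{c-b, a-d\}$ and trivially $\ge 0$. Thus $x \in [\max\{0,c-b,a-d\}, b+d]$.

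For the reverse inclusion: take $x$ in the composed interval. The key is to construct a realisation of $L$ with $|p(s_2)-p(t_1)| = x$. I would produce realisations $p_1$ of $L_1$ and $p_2$ of $L_2$, then rigidly move them (using the isometry action) so that they agree on the shared vertex $w$ and so that the distance $x$ is achieved. The construction needs an intermediate-value / continuity argument: as I vary $q \in [c,d]$ and $r \in [a,b]$ and the angle between the two "arms" at $w$, the achievable distance $|p(s_2)-p(t_1)|$ ranges continuously over $[\,|q-r|,\, q+r\,]$; taking the union over all valid $q,r$ yields exactly $[\max\{0,c-b,a-d\}, b+d]$. The hypothesis that $[L_1,s_1,t_1]$ and $[L_2,s_2,t_2]$ are closed intervals is what guarantees every intermediate value of $q$ and $r$ is actually realised.

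The main obstacle is the reverse inclusion, specifically assembling the two independently-chosen realisations into one realisation of the composite graph and verifying the target distance is hit. The subtlety is that the composite distance depends jointly on $q$, $r$, and the hinge angle at $w$, so I must argue that the image of the continuous map $(q,r,\theta)\mapsto$ (distance from $p(s_2)$ to $p(t_1)$) is connected and has the correct endpoints; this is where the interval hypothesis and an intermediate value argument do the real work, ruling out the degenerate concern that the attainable set might have gaps. I expect the forward inclusion to be a routine application of the triangle inequalities as sketched above.
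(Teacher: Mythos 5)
Your proof is correct and follows essentially the same route as the paper, which compresses the whole argument into the single observation that $x \in [L,s_2,t_1]$ if and only if there exist $y\in [L_1,s_1,t_1]$ and $z\in[L_2,s_2,t_2]$ with $x$, $y$, $z$ the side lengths of a (possibly degenerate) triangle; your forward inclusion is the triangle inequality half of this and your gluing/intermediate-value argument at the hinge vertex $w$ is the converse half. No issues.
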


\begin{proof} This follows immediately from the observation that
$x \in [L,s_2,t_1]$ if and only if there is $y\in [L_1,s_1,t_1]$ and
$z \in [L_2,s_2,t_2]$ such that $x$, $y$ and $z$ are the lengths of
the sides of a triangle.
\end{proof}

\begin{cor} Let $(G,s,t)$ be a TTSPG and let $L=(G,l)$. Then
$[L,s,t]$ is either empty or is a closed bounded interval of
$\mathbb R$.
\end{cor}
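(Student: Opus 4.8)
The plan is to proceed by structural induction on the construction of the TTSPG $(G,s,t)$, exploiting the fact that the class of TTSPGs is defined to be the smallest class containing $(K_2,s,t)$ and closed under series and parallel composition. Consequently every TTSPG is either the single edge $(K_2,s,t)$ or arises as a series or parallel composition of two strictly smaller TTSPGs, so the induction can be run on the number of edges of $G$. The two preceding lemmas on parallel and series composition will supply exactly the recursive formulas needed in the inductive step.

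For the base case, $(G,s,t)=(K_2,s,t)$ consists of a single edge of length $\ell := l(\{s,t\})$. Every realisation places $s$ and $t$ at Euclidean distance exactly $\ell$, so $[L,s,t]=\{\ell\}=[\ell,\ell]$, a (degenerate) closed bounded interval. For the parallel step, write $(G,s,t)=(G_1,s_1,t_1)\|(G_2,s_2,t_2)$. By the induction hypothesis each of $[L_1,s_1,t_1]$ and $[L_2,s_2,t_2]$ is empty or a closed bounded interval, and the parallel-composition lemma gives $[L,s,t]=[L_1,s_1,t_1]\cap[L_2,s_2,t_2]$. Since the intersection of two closed bounded intervals is again closed and bounded (or empty), this case is immediate.

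For the series step, write $(G,s,t)=(G_1,s_1,t_1)\circ(G_2,s_2,t_2)$. This is where I expect the only real obstacle to lie, namely the bookkeeping of emptiness, since the series-composition lemma is stated only for closed (hence nonempty) intervals. I would split into two subcases. If either $[L_i,s_i,t_i]$ is empty, then since any realisation of $L$ restricts to a realisation of each $L_i$, the linkage $L$ admits no realisation and $[L,s,t]$ is empty, consistent with the convention $[a,b]\circ\phi=\phi$ from the definition. If both are nonempty closed bounded intervals, say $[a,b]$ and $[c,d]$, the lemma applies and gives $[L,s,t]=[a,b]\circ[c,d]=[\max\{0,c-b,a-d\},\,b+d]$.

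It then remains only to verify that this expression really is a nonempty closed bounded interval, i.e.\ that $\max\{0,c-b,a-d\}\leq b+d$. This is the short routine check: $b,d\geq 0$ give $0\leq b+d$; the inequalities $a\leq b$ and $c\leq d$ give $c-b\leq d\leq b+d$ and $a-d\leq b\leq b+d$. Hence each candidate for the maximum is bounded above by $b+d$, so the interval is well formed, and the induction is complete. The upshot is that the entire argument reduces to the two composition lemmas together with the elementary closure of closed bounded intervals under intersection and under the operation $\circ$.
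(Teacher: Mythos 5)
Your proof is correct and is exactly the argument the paper has in mind: the paper's proof is the single line ``this follows from a simple induction on the number of edges,'' and your structural induction using the parallel-composition lemma (intersection of intervals) and the series-composition lemma (the $\circ$ operation), together with the routine check that $\max\{0,c-b,a-d\}\leq b+d$, is the intended fleshing-out of that induction.
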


\begin{proof} This follows from a simple induction on the number of edges in $G$. \end{proof}

We note that there are efficient algorithms available for
recognizing series parallel graphs and for finding a series parallel
decomposition of a given series parallel graph (see \cite{MR1161075}
and \cite{MR652904}). Now, it is clear how to compute $[L,s,t]$ when
$G$ is a TTSPG. In particular, this allows us to easily determine
whether or not a given series parallel linkage is realisable.

\begin{exa} \label{LinkageExample}
Let $L$ be the series parallel linkage whose combinatorial structure
is indicated in the diagram below \[
\includegraphics{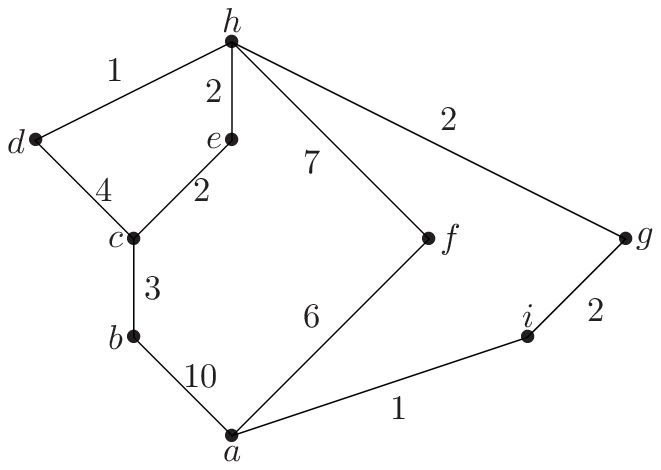}. \] The label on an edge
is the length of that edge. Consider the following five sublinkages
of $L$ \[ \begin{array}{ccc}
    P_1 = \includegraphics{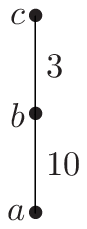} &
    P_2 = \includegraphics{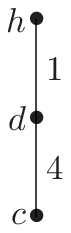} &
    P_3 = \includegraphics{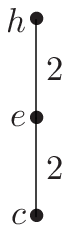} \\
    P_4 = \includegraphics{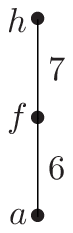}&     P_5 = \includegraphics{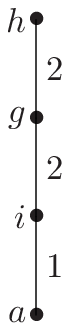} &
\end{array}\]
Clearly, \[(L,a,h) = ( ((P_2,c,h)\|(P_3,c,h))\circ (P_1,a,c))\|(P_4,a,h)\|(P_5,a,h).\]
Now $[P_1] = [7,13]$, $[P_2]=[3,5]$, $[P_3]=[0,4]$, $[P_4] = [1,13]$
and $[P_5] = [0,5]$. Therefore, \[ \begin{array}{rcl}
[L,a,h] &=& ( ([3,5]\cap [0,4])\circ[7,13])\cap[1,13] \cap[0,5] \\
&=& ([3,4]\circ[7,13])\cap[1,13]\cap[0,5] \\
&=& [3,17]\cap[1,13]\cap[0,5] \\
&=& [3,5] \end{array}\] In particular, the linkage $L$ is
realisable.
  \end{exa}

We conclude this section by showing that the realisability problem
for a given series parallel linkage can be answered by looking only
at the polygonal sublinkages of the given linkage. This is not
necessarily true for linkages whose underlying graph is not series
parallel. For example, consider the complete graph on four vertices where
each edge is given length 1. Every polygonal sublinkage of this
linkage is realisable in the plane but the complete linkage is not.
However, for series parallel graphs, we have the following.

\begin{cor}
Let $L = (G,l)$ be a series parallel linkage. Then $L$ is realisable
if and only if, for every polygonal subgraph $H$ of $G$, the linkage
$(H,l)$ is realisable. \end{cor}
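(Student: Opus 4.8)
The plan is to prove the two implications separately. The forward implication is routine: if $L$ is realisable and $p$ is a realisation, then for every polygonal subgraph $H$ of $G$ the restriction of $p$ to the vertices of $H$ is a realisation of $(H,l)$, so $(H,l)$ is realisable. Since a polygonal subgraph is connected it lies in a single connected component of $G$, and treating the components separately affects neither side of the equivalence; so I may assume $G$ is connected and fix terminals $s,t$ with $(G,s,t)$ a TTSPG. Realisability of $L$ is then equivalent to $[L,s,t]\neq\emptyset$, and I would prove the converse in contrapositive form --- if $[L,s,t]=\emptyset$ then $G$ has a non-realisable polygonal subgraph --- by induction on the number of edges, following the series parallel decomposition. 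The base case $(K_2,s,t)$ is vacuous. If $(G,s,t)=(G_1,s_1,t_1)\circ(G_2,s_2,t_2)$, then $[L,s,t]=[L_1,s_1,t_1]\circ[L_2,s_2,t_2]$; a series composition of two nonempty closed intervals is nonempty, so $[L,s,t]=\emptyset$ forces some $[L_i]=\emptyset$, and the inductive hypothesis applied to that $G_i$ produces a non-realisable polygonal subgraph, which is also a subgraph of $G$.

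The substantive case is the parallel composition $(G,s,t)=(G_1,s,t)\|(G_2,s,t)$, where $[L,s,t]=[L_1,s,t]\cap[L_2,s,t]$. If one factor is empty I recurse as before, so I may assume both $[L_1,s,t]$ and $[L_2,s,t]$ are nonempty closed bounded intervals that are disjoint; writing $[L_i,s,t]=[a_i,b_i]$ I may assume $b_1<a_2$. To exploit this I would establish the following auxiliary lemma, which I expect to be the main obstacle: for any realisable TTSPG $(H,u,v)$, the right endpoint $\max[H,u,v]$ equals the length of a shortest $u$-$v$ path in $H$, and the left endpoint $\min[H,u,v]$ equals $\min[Q]$ for some single $u$-$v$ path $Q$ in $H$ (equivalently $[H,u,v]=\bigcap_P[P]$ as $P$ ranges over all $u$-$v$ paths).

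The two upper bounds in the lemma are immediate, since restricting a realisation of $H$ to any $u$-$v$ path $P$ shows $[H,u,v]\subseteq[P]$, whence $\max[H,u,v]\leq$ the total length of $P$ and $\min[H,u,v]\geq\min[P]$. The matching lower bounds I would prove by induction along the decomposition, using that a path linkage with longest edge $m$ and total length $w$ has $[P]=[\max\{0,\,2m-w\},\,w]$ (from iterated series composition). For the right endpoint the argument is easy: concatenate shortest paths across a series node and take the shorter of the two across a parallel node. For the left endpoint the parallel node is again easy (the witnessing path from the branch with the larger $a_i$ works), but the series node is the crux: there $\min[H,u,v]=\max\{0,a_2-b_1,a_1-b_2\}$, and I would build the witnessing path by concatenating a shortest path of $H_1$ with a minimum-achieving path $Q_2$ of $H_2$, then check via the formula $\max\{0,2m-w\}$ that the resulting chain attains this value; the verification reduces to a short inequality using $2m-w\geq a_2-b_1$ together with the restriction upper bound.

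Granting the lemma, the parallel case finishes cleanly. Choose a shortest $s$-$t$ path $P_1\subseteq G_1$, so that the right endpoint of $[P_1]$ equals $b_1$, and a single $s$-$t$ path $P_2\subseteq G_2$ with $\min[P_2]=a_2$. Since $G_1$ and $G_2$ share only the vertices $s$ and $t$, the paths $P_1$ and $P_2$ are internally disjoint and their union is a polygonal subgraph of $G$. By the parallel composition lemma its terminal interval is $[P_1]\cap[P_2]$, and this is empty because the right endpoint $b_1$ of $[P_1]$ is strictly less than the left endpoint $a_2$ of $[P_2]$. Thus $P_1\cup P_2$ is a non-realisable polygonal subgraph of $G$, which completes the induction and the proof.
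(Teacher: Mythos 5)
Your proposal is correct and follows essentially the same route as the paper: reduce to a parallel decomposition whose two factors have nonempty but disjoint intervals (the series case being trivial since a series composition of nonempty intervals is nonempty), then exhibit a non-realisable polygon as the union of a path in $G_1$ whose interval has right endpoint $b_1$ and a path in $G_2$ whose interval has left endpoint $a_2$. The only difference is that the paper simply asserts the existence of these two extremal paths as an ``observation,'' whereas you correctly identify it as the real content and supply an inductive proof (in the stronger form $[H,u,v]=\bigcap_P[P]$); your sketch of that lemma is sound, modulo the minor point that in the series case one must take whichever of the two concatenations matches the term achieving $\max\{0,a_1-b_2,a_2-b_1\}$.
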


\begin{proof}
It is obvious that if $L$ is realisable then every sublinkage of $L$
is also realisable. For the other implication, we argue by
contradiction. Suppose that $L$ is a counterexample to the statement
with the minimal number of edges. So $L$ is not realisable but every
polygonal sublinkage of $L$ is realisable. Note that the minimality
of $L$ ensures that every proper sublinkage of $L$ is realisable. In
particular $L$ cannot be decomposed as a series composition of
proper sublinkages. So there is some pair of vertices $s,t$ in $G$
such that $(G,s,t) = (G_1,s,t)\| (G_2,s,t)$, and such that $
[L_1,s,t]$ and $[L_2,s,t]$ are nonempty but $[L_1,s,t] \cap
[L_2,s,t]$ is empty.  Assume without loss of generality that
$[L_1,s,t]=[a_1,b_1]$ lies to the left of $[L_2,s,t]=[a_2,b_2]$
(i.e. $b_1<a_2$). Now we observe that there is some path graph $P_1$
joining $s$ to $t$ contained in $G_1$ such that $[P_1] =
[\alpha,b_1]$ and there is some path graph $P_2$ joining $s$ to $t$
contained in $G_2$ such that $[P_2] = [a_2,\beta]$. It is clear that
the polygonal linkage $(P_1,s,t)\| (P_2,s,t)$ is not realisable
which contradicts our assumption that all polygonal sublinkages of
$L$ are realisable.
\end{proof}

\section{Connectedness}

To understand the connectedness of $M(L)$ for a series parallel
linkage $L$,  we need to more precisely understand the relationship
between configurations of a path linkage and the corresponding distances between
the images of the terminal vertices.

Throughout this section let $P$ be a path linkage with $k$ edges and
suppose that $k \geq 2$. We suppose that all the edges of the linkage
$P$ have nonzero length.  Let $s$ and $t$ be the terminal vertices of
$P$. Let $\theta: M(P) \rightarrow [P]$, $\theta(p) := |p(s)-p(t)|$.
In this section we will show that $\theta$ has a certain lifting
property. The basic idea is to use Morse theory to analyse the
fibrewise structure of $\theta$. We remark that the differential
properties of the map $\theta$ are well understood (see
\cite{MR2125272}). It is differentiable at all points not in
$\theta^{-1}(0)$. Also the points where the derivative of $\theta$
vanishes are precisely the straight line configurations of $P$ (i.e
those points $p$ for which the set $\{p(v):v \in V_P\}$ lies in an
affine line in $\mathbb R^2$). We will say that $p \in M(P)$ is a
critical point of $\theta$ if either $\theta(p) = 0$ or $\theta'(p)
= 0$.


The basic question that we now consider is this. Suppose that $p$
and $q$ are two configurations of a path linkage $P$. Clearly, since
$M(P)$ is pathwise connected (it is homeomorphic to $(S^1)^{k-1}$),
it is possible to find a path (in the sense of topological spaces)
in $M(P)$ that connects $p$ to $q$. However, suppose that the motion
of the endpoints of $P$ is specified. Is it possible to find a path
in $M(P)$ connecting $p$ and $q$ so that the endpoints of $P$ move
in a specified way? Theorems \ref{liftingtheorem1} and \ref{liftingtheorem2} below will be the key
to constructing paths in $M(L)$ when $L$ is a series parallel
linkage.

First, we need some notation to describe a particular subset of $[P]$.

\begin{defn} We define $\nabla(P)$ to be the following subset
of $[P]$. \[ \nabla(P) = \{ x \in [P]: \theta^{-1}(x) \text{ is connected}\}.\]
\end{defn}

Note that if $P$ has just two edges of length $l_1$ and $l_2$, then
$\nabla(P) = \{|l_1-l_2|, l_1+l_2\}$ (i.e. $\nabla(P)$ consists of
two points). When $P$ has more than two edges, $\nabla(P)$ is union
of at most two closed intervals, as the following analysis shows.

Let $l_1,\dots,l_k$ be the lengths of the edges in $P$, with $k\geq3$.
We suppose for the moment that $l_1 \geq
l_2\geq \dots \geq l_k$. (Note that permuting the edge lengths does
not affect the homeomorphism type of $\theta^{-1}(x)$.)

In the following lemma we are using the convention that for $b<a$,
$[a,b]$ is the empty set.

\begin{lem} \label{connectivityrange} Let $S = \sum_{i=1}^k l_i$.
Then   $\nabla(P)$ is \[[P]\cap \biggl([2(l_2+l_3)-S,l_3]\cup
[l_3,\min\{l_1,S-2l_2\}]\cup[\max\{l_1,S-2(l_1+l_2)\},S]\biggr).
\] \end{lem}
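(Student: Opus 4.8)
The plan is to reduce the connectivity of each fibre $\theta^{-1}(x)$ to a statement about polygonal linkages, so that Theorem \ref{polygonal} can be applied directly. The key observation is that for $x>0$ the fibre $\theta^{-1}(x)$ is homeomorphic to a polygon moduli space. Indeed, every orbit in $\theta^{-1}(x)$ has a unique representative $p$ with $p(s)=(0,0)$ and $p(t)=(x,0)$, since pinning these two points exhausts the freedom of the orientation-preserving isometry group whenever $x\neq 0$. Adjoining a virtual edge of length $x$ joining $t$ to $s$ therefore identifies $\theta^{-1}(x)$ with $M(Q_x)$, where $Q_x$ is the polygonal linkage with edge lengths $l_1,\dots,l_k,x$ and the new edge is pinned to run from $(x,0)$ to $(0,0)$. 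This is exactly the normalisation used to form a polygon moduli space, so the identification is a homeomorphism. The fibre $\theta^{-1}(0)$ is the moduli space of the genuine $k$-gon with edge lengths $l_1,\dots,l_k$, and is handled as the limiting case $x=0$ of the same description.

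With this identification in hand, I would invoke Theorem \ref{polygonal}. Its realizability clause says $M(Q_x)$ is nonempty precisely when the longest edge $\max\{l_1,x\}$ is at most half the total length $\tfrac12(S+x)$, and a short computation shows this is equivalent to $x\in[P]=[\max\{0,2l_1-S\},S]$; this is why the final answer is intersected with $[P]$. The connectivity clause then says that $\theta^{-1}(x)\cong M(Q_x)$ is connected if and only if the sum of the second and third longest among $l_1,\dots,l_k,x$ is at most $\tfrac12(S+x)$. Thus membership of $x$ in $\nabla(P)$ is decided by a single linear inequality in $x$, once we know where $x$ sits in the sorted list of lengths.

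The bulk of the argument is then a case analysis according to the position of $x$ relative to $l_1\geq l_2\geq l_3$. When $x\leq l_3$ the two largest of the other edges are $l_2$ and $l_3$, giving $l_2+l_3\leq\tfrac12(S+x)$, i.e. $x\geq 2(l_2+l_3)-S$, hence the interval $[2(l_2+l_3)-S,\,l_3]$. When $l_3\leq x\leq l_1$ the relevant pair is $l_2$ together with $x$ (regardless of whether $x$ is second or third largest), giving $l_2+x\leq\tfrac12(S+x)$, i.e. $x\leq S-2l_2$, hence $[l_3,\,\min\{l_1,S-2l_2\}]$. When $x\geq l_1$ the relevant pair is $l_1$ and $l_2$, giving $l_1+l_2\leq\tfrac12(S+x)$, i.e. $x\geq 2(l_1+l_2)-S$, hence $[\max\{l_1,\,2(l_1+l_2)-S\},\,S]$. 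Taking the union of these three intervals and intersecting with $[P]$ yields the claimed set; since every inequality is non-strict, each piece is closed, so $\nabla(P)$ is a union of closed intervals.

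I expect the genuine subtleties to lie not in the case analysis but in two places. First, one must confirm the reduction at the exceptional fibres: the fibre over $x=0$, and the endpoint fibres over $x=S$ and $x=\max\{0,2l_1-S\}$, which collapse to single points and are trivially connected; these must be checked against the formula directly rather than through a generic homeomorphism. Second, one must verify that the three intervals glue consistently at the seam values $x=l_3$ and $x=l_1$, where the two neighbouring inequalities must reduce to the same condition. For instance, at $x=l_1$ both $x\leq S-2l_2$ and $x\geq 2(l_1+l_2)-S$ collapse to $l_1+2l_2\leq S$, and at $x=l_3$ both boundary conditions collapse to $2l_2+l_3\leq S$, so there is no ambiguity at the joins and $\nabla(P)$ is described unambiguously as the stated union. (I note in passing that this route uses Theorem \ref{polygonal} in place of the Morse-theoretic analysis of $\theta$ flagged earlier, which is needed instead for the lifting theorems to follow.)
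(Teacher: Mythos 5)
Your proposal is correct and is essentially the paper's own proof, which is the one-line reduction of $\theta^{-1}(x)$ to the polygonal linkage obtained by adjoining an edge $\{s,t\}$ of length $x$ and then applying Theorem \ref{polygonal}; you have simply written out the case analysis that the paper calls ``straightforward.'' Note that your (correctly derived) third interval $[\max\{l_1,\,2(l_1+l_2)-S\},S]$ does not literally match the lemma's printed $[\max\{l_1,\,S-2(l_1+l_2)\},S]$ --- the statement appears to contain a sign typo (e.g.\ for $l_1=10$, $l_2=5$, $l_3=1$ the printed formula would wrongly place $x=12$ in $\nabla(P)$), and your version is the right one.
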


\begin{proof} This is a straightforward application of Theorem \ref{polygonal} to
the polygonal linkage obtained by adjoining the edge $\{s,t\}$ to
$P$ and extending the length function $l$ by defining $l(\{s,t\}) =
x$, where $x$ is and arbitrary element of $[P]$. \end{proof}

In particular, Lemma \ref{connectivityrange} shows that for a given
path linkage, $P$, it is very straightforward to calculate
$\nabla(P)$.

\begin{exa} Suppose $P$ has 3 edges and that $l_1=l_2=l_3 = 1$.
Then $\nabla(P) = [1,3]$ which is a proper subset of $[P] = [0,3]$.
\end{exa}


\subsection{Lifting Properties of $\theta$}\label{sec:lifting}

\begin{thm} \label{liftingtheorem1} Let $p,q \in M(P)$ and suppose
that neither $\theta(p)$ nor $\theta(q)$ are critical values of
$\theta$. Let $\alpha:[0,1] \rightarrow [P]$ be continuous and
suppose that $\alpha(0) = |p(s)-p(t)| = \theta(p)$ and $\alpha(1) =
|q(s)-q(t)| = \theta (q)$. If $\text{im}(\alpha) \cap \nabla(P)$ is
non empty then there exists a continuous lift $\tilde{\alpha}:[0,1]
\rightarrow M(P)$ such that $\tilde{\alpha}(0) = p$,
$\tilde{\alpha}(1) = q$ and $\theta \circ \tilde{\alpha} = \alpha$.
\end{thm}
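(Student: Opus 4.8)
The plan is to reinterpret a lift of $\alpha$ as a section of the pullback family $\Gamma = \{(\tau,y) \in [0,1]\times M(P): \theta(y)=\alpha(\tau)\}$ under the projection $\pi:\Gamma\to[0,1]$, $\pi(\tau,y)=\tau$; a continuous lift $\tilde\alpha$ with the prescribed endpoints is exactly a continuous section of $\pi$ taking the value $(0,p)$ at $0$ and $(1,q)$ at $1$. First I would set up the Morse-theoretic structure of $\theta$. Since $\theta$ is differentiable away from $\theta^{-1}(0)$ with critical points exactly the straight line configurations, there are only finitely many critical values $0=c_0<c_1<\dots<c_m$ in $[P]$, and over each open interval of regular values $\theta$ is a proper submersion, hence a fibre bundle by Ehresmann's theorem. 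In particular the diffeomorphism type and the set of connected components of $\theta^{-1}(x)$ are locally constant on the regular set.

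Next I would pin down the components of the regular fibres. The key observation is that $\theta^{-1}(x)$ is precisely the moduli space of the polygonal linkage obtained from $P$ by adjoining an edge $\{s,t\}$ of length $x$, so Theorem \ref{polygonal} applies: $\theta^{-1}(x)$ is connected exactly when $x\in\nabla(P)$ (this is the content of Lemma \ref{connectivityrange}), and whenever it is nonempty and disconnected it has exactly two components. Thus over the regular set the assignment sending $x$ to the set of components of $\theta^{-1}(x)$ organises into an at most two-sheeted structure, with the two sheets forced to coincide over the interior of $\nabla(P)$.

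With this in hand I would construct the section in three stages using a chosen parameter $\tau_0$ with $x_0 := \alpha(\tau_0)\in\nabla(P)$. Over $[0,\tau_0]$ I would lift $\alpha$ starting from $p$, and over $[\tau_0,1]$ I would lift $\alpha$ backwards from $q$; each such lift exists because $\pi$ is proper and surjective and the bundle structure lets one transport points across each regular subinterval, choosing freely within the relevant connected component of the target fibre. Since $\theta^{-1}(x_0)$ is connected, the endpoint of the forward lift and the start point of the backward lift lie in the same component of $\theta^{-1}(x_0)$, and the freedom in the lift construction lets me arrange these two points to coincide; concatenating then yields the desired section, that is, the lift $\tilde\alpha$.

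The main obstacle is the step asserting that one may choose the endpoint of a lift freely within a connected component of the target fibre even when $\alpha$ crosses one of the critical values $c_i$ between the endpoints and $x_0$. Across a critical value the fibre bundle structure breaks and components may merge or split, so I must show that the set of reachable endpoints of lifts starting at $p$ is a union of connected components of $\theta^{-1}(x_0)$; since that fibre is connected, this set is then all of it. This is exactly where the Morse-theoretic description of the passage through a straight line configuration, together with the bound of at most two components, has to be used to control how the component structure degenerates. Everything else is soft topology of sections over the interval $[0,1]$.
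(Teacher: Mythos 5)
Your overall architecture coincides with the paper's: both arguments rest on the Morse-theoretic fibrewise description of $\theta$ (finitely many critical values, with $M_i=\theta^{-1}([a_i,a_{i+1}])$ modelled as $([a_i,a_{i+1}]\times\Sigma_i)/\sim$), and both route the lift through a parameter $\tau_0$ (the paper's $c$) at which $\alpha$ meets $\nabla(P)$, using the connectedness of that fibre to join a forward lift from $p$ to a backward lift from $q$. The problem is that what you flag as ``the main obstacle'' is precisely the content of the proof, and you leave it open. To concatenate the two lifts you need them to agree \emph{exactly} at $\tau_0$ (you cannot interpolate inside the fibre, since $\alpha$ need not be constant near $\tau_0$), so you must show that the set of points of $\theta^{-1}(\alpha(\tau_0))$ reachable by lifts of $\alpha|_{[0,\tau_0]}$ starting at $p$ meets the set reachable by backward lifts starting at $q$. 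Ehresmann's theorem over the regular strata does not supply this: the issue is exactly what happens as a lift crosses a critical level, about which the bundle structure over the open regular intervals says nothing. The paper closes this by extracting from the local model (in which $\sim$ collapses embedded spheres over the nonzero critical levels) the key observation that the projection $([a_i,a_{i+1}]\times\Sigma_i)/\sim\ \rightarrow [a_i,a_{i+1}]$ admits a continuous section through any two prescribed points over the two ends of the interval, provided they lie in the same path component of $\Sigma_i$; chaining such sections produces global sections $\gamma_1,\gamma_2$ of $\theta$ with $\gamma_1(\theta(p))=p$, $\gamma_2(\theta(q))=q$ and $\gamma_1(\alpha(c))=\gamma_2(\alpha(c))$. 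Your ``reachable set is a union of components'' claim is essentially equivalent to this, but you give no argument for it.

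A second, smaller omission: you implicitly assume $\tau_0$ can be chosen with $\alpha(\tau_0)$ a regular value in $\nabla(P)$. If $\text{im}(\alpha)\cap\nabla(P)$ consists only of a boundary point of $\nabla(P)$ --- necessarily a critical value of $\theta$ --- then the fibre over it is connected but degenerate, and the meeting point of the two lifts cannot be chosen arbitrarily. The paper handles this by insisting that the common value $r=\gamma_1(\alpha(c))=\gamma_2(\alpha(c))$ be a critical point of $\theta$ (a point onto which spheres are collapsed from both sides), so that sections from either side can be arranged to pass through it. Without some such choice your concatenation can fail in exactly this boundary case.
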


Note that Theorem \ref{liftingtheorem1} requires that neither $p$
nor $q$ lie in the preimage of a critical value. In order to remove
this hypothesis, we must tighten the requirements on $\alpha$. In
particular, we may require $\alpha$ remains stationary for some
positive amount of time near 0 or near 1. More precisely, we have
the following.

\begin{thm} \label{liftingtheorem2}
Let $p,q \in M(P)$. Let $\alpha:[0,1] \rightarrow [P]$ be continuous
and suppose that $\alpha(x) = \theta(p)$ for $x \in [0,\epsilon]$
for some $\epsilon >0$,  and that $\alpha(x) = \theta (q)$ for $x
\in [1-\delta,1]$ for some $\delta >0$. If $\text{im}(\alpha) \cap
\nabla(P)$ is non empty then there exists a continuous lift
$\tilde{\alpha}:[0,1] \rightarrow M(P)$ such that
$\tilde{\alpha}(0) = p$, $\tilde{\alpha}(1) = q$ and $\theta \circ
\tilde{\alpha} = \alpha$. \end{thm}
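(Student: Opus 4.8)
The plan is to deduce Theorem~\ref{liftingtheorem2} from Theorem~\ref{liftingtheorem1} by a limiting/concatenation argument that sidesteps the critical values at the endpoints. The issue addressed by Theorem~\ref{liftingtheorem2} is precisely that $\theta(p)$ or $\theta(q)$ may be a critical value (for instance $\theta(p)=0$, a straight line configuration, or a value where $\theta'$ vanishes), so the fibre $\theta^{-1}(\theta(p))$ may fail to be a manifold on which the earlier lifting argument operates cleanly. The extra hypothesis---that $\alpha$ is stationary on $[0,\epsilon]$ and on $[1-\delta,1]$---is what buys us room to connect $p$ and $q$ to nearby regular points within a single fibre before invoking Theorem~\ref{liftingtheorem1}.

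First I would handle the case where $\theta(p)$ is a critical value. The key observation is that since $\alpha$ is constant equal to $\theta(p)$ on $[0,\epsilon]$, a lift of $\alpha|_{[0,\epsilon]}$ is simply a path inside the single fibre $\theta^{-1}(\theta(p))$ starting at $p$. I would choose a regular value $c$ of $\theta$ very close to $\theta(p)$ and lying in the same component of $\operatorname{im}(\alpha)\cap[P]$ on the relevant side, and produce a short path from $p$ to some point $p'$ with $\theta(p')=c$; the point $p'$ must be chosen so that it lies in the appropriate component of $\theta^{-1}(c)$, which is exactly where $\nabla(P)$ and Lemma~\ref{connectivityrange} control how many components the regular fibres have. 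Doing the symmetric construction at the sink end near $q$ produces $q'$ with $\theta(q')=c'$ a regular value. Then I would reparametrise $\alpha$ so that the middle portion is a path $\alpha'$ from $c$ to $c'$ whose endpoints are regular values and whose image still meets $\nabla(P)$, apply Theorem~\ref{liftingtheorem1} to lift $\alpha'$ to a path from $p'$ to $q'$, and concatenate the three lifts to obtain $\tilde\alpha$.

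The main obstacle will be ensuring that the short in-fibre paths from $p$ to $p'$ and from $q$ to $q'$ land in the correct component of the nearby regular fibre, so that the pieces concatenate into a genuine continuous lift rather than one that jumps between sheets. This is a local analysis of $\theta$ near a critical point: for $\theta(p)=0$ I would use the explicit description of the fibre over $0$ (folded straight line configurations) and a small perturbation into a nonzero regular value; for the case $\theta'(p)=0$ with $\theta(p)\neq 0$ I would use that the critical points are exactly the straight line configurations, together with the Morse-theoretic picture underlying Theorem~\ref{liftingtheorem1}, to see that a neighbourhood of $p$ in $M(P)$ maps onto a one-sided neighbourhood of the critical value and that $p$ connects within the fibre structure to a definite component on the regular side. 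Because $\alpha$ must reach $\nabla(P)$, the image of $\alpha$ necessarily passes through a value where the fibre is connected, and this is what guarantees the two chosen components can be joined by the lift of the middle segment.

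Finally I would note that the hypothesis can only fail to be needed when $p$ and $q$ are already regular, in which case Theorem~\ref{liftingtheorem2} reduces to Theorem~\ref{liftingtheorem1} after trimming the constant end-segments; so the genuinely new content is entirely the endpoint desingularisation described above. I expect the bookkeeping of which side of the critical value the regular approximant $c$ lies on, and the matching of fibre components across the transition, to be the only delicate points, with everything else following formally from concatenation of continuous lifts and the connectivity guaranteed by $\operatorname{im}(\alpha)\cap\nabla(P)\neq\emptyset$.
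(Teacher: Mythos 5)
Your proposal is essentially the paper's own argument: the paper's proof of Theorem \ref{liftingtheorem2} consists precisely of the observation that the stationary intervals $[0,\epsilon]$ and $[1-\delta,1]$ give room to move within the (possibly critical) fibres over $\theta(p)$ and $\theta(q)$ to points from which the section-based lifting used for Theorem \ref{liftingtheorem1} lands in the correct path components of the neighbouring fibres. Your ``endpoint desingularisation'' is exactly this in-fibre adjustment, so the two approaches coincide.
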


In order to prove Theorems \ref{liftingtheorem1} and
\ref{liftingtheorem2} we will need to understand the fibrewise structure of
the map $\theta: M(P) \rightarrow [P]$. This map has been studied
 by previous authors using the techniques of
Morse theory. In particular, Shinamoto and Vanderwaart have given a
very clear account of this theory in \cite{MR2125272} (their
notation is somewhat different to ours).
 We can summarise the situation as follows. Let $W = M(P) -
\theta^{-1}(0)$. Then $\theta|_{W}: W \rightarrow [P]$ is a
differentiable function. Moreover, in this restricted domain, $\theta$ has
finitely many critical points, all of which are nondegenerate. If we
also include $0$ as a critical value of $\theta :M(P) \rightarrow
[P]$, then there are finitely many critical values $0 \leq a_0 <a_1
< \dots <a_s = S$. For $i=0,\dots s-1$, let $M_i =
\theta^{-1}([a_i,a_{i+1}])$. By standard results of Morse theory
(see \cite{MR0163331}), we know that for each $i = 0,\dots, s-1$
there is a smooth closed $k-2$ dimensional manifold $\Sigma_{i}$
such that
\[M_{i} \equiv \frac{[a_i,a_{i+1}] \times \Sigma_i}{\sim}\] where
$\sim$ collapses some subsets of $\{a_i\}\times \Sigma_i$ to points
and also some subsets of $\{a_{i+1}\}\times \Sigma_{i}$ to points.
In other words $M_i$ is obtained by taking $[a_i,a_{i+1}]\times
\Sigma_i$ and making some identifications over the endpoints $a_i$
and $a_{i+1}$. Indeed, we can be more explicit over the non zero
critical points. In those cases the identifications are obtained by
collapsing some finite number of embedded spheres in $\Sigma_i$. In
the case where $a_0 = 0$, the collapsing over $a_0$ can be a little
more complicated, but that does not affect the validity of our
arguments below.

So, for each $i = 1,\dots,s-1$, we have a commutative diagram \[ \xymatrix{
    M_i \ar[rr]^{\equiv}\ar[dr]_{\theta}& & \frac{[a_i,a_{i+1}]
    \times \Sigma_i}{\sim}\ar[dl]^{\text{projection}}\\
    & [a_i,a_i+1] & } \]
Clearly all of the non zero critical values of $\theta$ are
contained in $\nabla(P)$ (this is an easy exercise for the reader!).
Moreover, it is known (see \cite{MR2076003}) that if $x \notin \nabla(P)$,
then $\theta^{-1}(x)$ is the disjoint union of two copies of
$(S^1)^{k-2}$. In other words, $\Sigma_i$ is disconnected if and
only if $\Sigma_i = (S^1)^{k-2} \sqcup (S^1)^{k-2}$.

See Figure \ref{fibrewisefig} for an illustration of the structure
of $\theta$. For the purposes of illustration we have represented
pieces of $M(P)$ as two dimensional surfaces, even though $M(P)$ is
actually a torus of dimension $k-1$. However, Figure
\ref{fibrewisefig} does give a reasonably faithful picture of how
the fibres of $\theta$ behave. In the example illustrated in the
figure the open interval $(a_j,a_{j+1})$ lies in the complement of
$\nabla(P)$. The curves drawn in the interior of $M_0$, $M_i$ and
$M_j$ are meant to represent the fibres of $\theta$ over the points
$x$, $y$ and $z$ respectively.
\begin{figure} \includegraphics{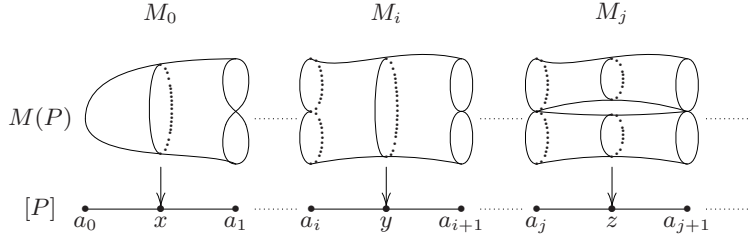}
\caption{ \label{fibrewisefig} The fibrewise structure of the map
$\theta$.}\end{figure}

Now, it is clear how we should prove Theorems \ref{liftingtheorem1}
and \ref{liftingtheorem2}.\medskip

\noindent {\em Proof of Theorem \ref{liftingtheorem1}.} We
make the following observations. The projection \[
\frac{[a_i,a_{i+1}]\times \Sigma_i}{\sim} \rightarrow
[a_i,a_{i+1}]\] has sections. Indeed given points $z_1$ and $z_2$
that lie in the same path component of $\Sigma_i$ and given distinct
points $x_1$ and $x_2$ in $[a_i,a_{i+1}]$ there exists a continuous section
\[\sigma:[a_i,a_{i+1}]\rightarrow \frac{[a_i,a_{i+1}]\times
\Sigma_i}{\sim}\] such that $\sigma (x_1) = (x_1,z_1) $ and $\sigma
(x_2) = (x_2,z_2)$.

By combining $\alpha$ with a judicious use of these sections, it is
clear that we can find the required lift of $\alpha$. For the sake
of completeness, we have included the details of this argument
below. However these details are rather tedious and not particularly
enlightening. Thus, if the reader if sufficiently convinced by the
arguments already presented, he may, at this point, skip the rest of
this proof.

We must consider several different cases. First, let us deal with
the case where $p$ and $q $ happen to lie in same fibre of $\theta$.
If $\alpha(x) = \theta(p) = \theta(q) $ for all $x \in [0,1]$ then
the conclusion is clearly true, as by assumption we must have
$\theta (p) \in \nabla(P)$, and therefore we can lift $\alpha$ by
choosing a path within $\theta^{-1}(\theta(p))$ that connects $p$
and $q$. If $\alpha$ is not a constant function then we can choose
some $b \in [0,1]$ such that $b\neq \theta (p)$ but so that
$\alpha([0,b])$ is contained within one of the open intervals
$(a_i,a_{i+1})$. Now it is clear that we can lift
$\alpha|_{[0,b]}:[0,b]\rightarrow [P]$ since $\theta$ restricts to a
trivial fibre bundle over $\alpha([0,b])$. Thus we are left the
problem of lifting $\alpha|_{[b,1]}:[b,1]\rightarrow [P]$ with
specified lifts of $b$ and of 1. In other words, we have reduced to
case where $p$ and $q$ lie in different fibres of $\theta$.

Suppose now that $p$ and $ q$ lie in different fibres of $\theta$.
Also suppose that $\theta(p) \in \nabla(P)$ (similar arguments apply
if $\theta (q) \in \nabla(P)$). Since we have assumed that
$\theta(p)$ is not a critical value, $\theta(p)$ must in fact lie in
the interior of $\nabla(P)$. Therefore, by concatenating local
sections over $[a_i,a_{i+1}]$ of the type described above, we can
find a (global) section $\gamma: [P] \rightarrow M(P)$ such that
$\gamma (\theta (p) ) = p $ and $\gamma(\theta(q)) = q $. Let
$\tilde \alpha = \gamma \circ \alpha$. Clearly $\tilde \alpha$ is
the required lift in this case.

Finally, we consider the case where $\theta(p) \notin\nabla(P)$ and
$\theta(q) \notin \nabla (P)$. Choose some $c \in [0,1]$ such that
$\alpha(c) \in \nabla(P)$ (our hypotheses guarantee the existence of
at least one such $c$). Now we choose a point $r$ in the fibre
$\theta^{-1}(\alpha(c))$. If $\alpha(c)$ is in the interior of
$\nabla(P)$, we can choose $r$ arbitrarily within the fibre.
However, if $\alpha(c)$ happens to be on the boundary of $\nabla(P)$
(and is therefore also a critical value of $\theta$), we must be
more selective in our choice of $r$. In this case we choose $r$ to a
critical point of $ \theta $. Now, once we have chosen $r$ in this
way, we can find two global sections $\gamma_{1}$ and $\gamma_2$ of
$\theta$, such that $\gamma_1(\alpha(c))= \gamma_2(\alpha(c)) = r$,
$\gamma_1(\theta(p)) = p$ and $\gamma_2(\theta (q)) = q$. Now, for $
x\in [0,c]$, let $\tilde\alpha (x) = \gamma_1(\alpha(x))$ and for $x
\in [c,1]$, let $\tilde\alpha(x) = \gamma_2(\alpha(x))$. One readily
checks that $\tilde\alpha$ is the required lift of $\alpha$ in this
case.
 \qed
\medskip

\noindent {\em Proof of Theorem \ref{liftingtheorem2}.} In the case
that one of $p$ or $q$ lies in a critical fibre (i.e the preimage of
one of the $a_i$s), then it may be necessary to first move to a
different point in that fibre to ensure that when we lift along
sections, we end up in the right path component of subsequent
fibres. The hypotheses of Theorem \ref{liftingtheorem2} allow the
intervals $[0,\epsilon]$ and $[1-\delta,1]$ to carry out this
adjustment within the fibre. \qed \medskip

We will also need the following lifting result later. Its proof is
again an straightforward consequence of the fibrewise structure of
$\theta $ described above, so we shall leave the reader to fill in
the details in this case.

\begin{thm} \label{liftingtheorem3}
Let $p \in M(P)$ and suppose that $\alpha :[0,1]\rightarrow [P]$ is
a continuous function such that $\alpha (0) = \theta (p)$. There is
some continuous lift $\tilde{\alpha} : [0,1 ] \rightarrow M(P)$ such
that $\tilde{\alpha}(0) = p$ and $\theta \circ \tilde{\alpha} =
\alpha$. \end{thm}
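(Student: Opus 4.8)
The plan is to prove Theorem \ref{liftingtheorem3} by leveraging the fibrewise structure of $\theta$ already established, together with the two lifting theorems proved above. The key simplification relative to Theorems \ref{liftingtheorem1} and \ref{liftingtheorem2} is that here only the starting point $p$ is prescribed, with no constraint on the terminal point of the lift; this extra freedom should make the lifting always possible, regardless of whether $\alpha$ meets $\nabla(P)$. First I would handle the easy local situation: whenever $\alpha$ restricts to a subinterval whose image lies in a single open interval $(a_i,a_{i+1})$ between consecutive critical values, the map $\theta$ restricts to a trivial fibre bundle over that image (indeed $\theta^{-1}((a_i,a_{i+1}))$ is a product), so any specified lift of the left endpoint extends uniquely (by path-lifting in a trivial bundle) to a lift over the whole subinterval.

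The substance of the argument is to patch these local lifts across the critical values. I would partition $[0,1]$ using the preimages under $\alpha$ of the critical values $a_0,\dots,a_s$, and build $\tilde\alpha$ inductively from left to right. Given a lift defined up to a point $c$ with $\tilde\alpha(c)$ already chosen in the fibre $\theta^{-1}(\alpha(c))$, I would extend it across the next stretch of $\alpha$. The only delicate moments are when $\alpha$ touches a critical value $a_i$. Here I use the explicit model $M_i \equiv ([a_i,a_{i+1}]\times\Sigma_i)/\!\sim$: since the projection admits continuous sections through any prescribed point (as exploited in the proof of Theorem \ref{liftingtheorem1}), and since the collapsing identifications over a critical value only merge points within a single fibre, the lift can always be continued — there is no obstruction of the kind that forced the $\nabla(P)$ hypothesis in Theorem \ref{liftingtheorem1}, precisely because we are free to let $\tilde\alpha$ land on whichever path component of the subsequent fibre the section delivers.

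The step I expect to be the main obstacle is the bookkeeping when $\alpha$ approaches a critical value $a_i$ from one side, possibly dwells there, and then retreats on the same side rather than crossing. In that case the relevant identifications collapse embedded spheres in $\Sigma_i$, and I must verify that the section chosen on the way in can be matched to a section on the way out through the common point $\tilde\alpha(\alpha^{-1}(a_i))$ in the fibre over $a_i$; this is exactly the situation the intervals $[0,\epsilon]$ and $[1-\delta,1]$ were introduced to manage in Theorem \ref{liftingtheorem2}, but here we have no such buffer at interior critical times. The resolution is that, unlike in Theorems \ref{liftingtheorem1} and \ref{liftingtheorem2}, the endpoint $\tilde\alpha(1)$ is unconstrained, so at each critical crossing I am free to select any point of the target fibre and route subsequent sections accordingly; the reader can check that concatenating the resulting local sections yields a globally continuous lift $\tilde\alpha$ with $\tilde\alpha(0)=p$ and $\theta\circ\tilde\alpha=\alpha$, which completes the proof.
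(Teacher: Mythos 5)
The paper does not actually write out a proof of Theorem \ref{liftingtheorem3}: it declares the result a straightforward consequence of the fibrewise structure of $\theta$ and leaves the details to the reader, so you are filling in exactly the intended outline, and your core observation --- that with the terminal point unconstrained, a section through any prescribed point of any fibre can always be continued past a critical value, so no $\nabla(P)$ hypothesis is needed --- is correct. One step does not work as written, though: you propose to ``partition $[0,1]$ using the preimages under $\alpha$ of the critical values'' and induct from left to right, but $\alpha^{-1}(a_i)$ need not be finite (a continuous $\alpha$ can cross or touch a critical value infinitely often), so there is no well-founded sequence of critical times to induct over, and your closing appeal to ``concatenating the resulting local sections'' presupposes that there are finitely many of them. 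The clean repair, which also disposes of your worry about $\alpha$ dwelling at or retreating from an $a_i$, is to do all the concatenation at the level of $[P]$ rather than of $[0,1]$: build a single global section $\gamma:[P]\rightarrow M(P)$ with $\gamma(\theta(p))=p$ by splicing together local sections of the blocks $M_i\equiv([a_i,a_{i+1}]\times\Sigma_i)/\!\sim$ (exactly as in the proof of Theorem \ref{liftingtheorem1} in the case $\theta(p)\in\nabla(P)$, except that only one point must be interpolated here), and set $\tilde\alpha=\gamma\circ\alpha$. Then $\tilde\alpha$ is continuous no matter how $\alpha$ meets the critical values, $\tilde\alpha(0)=\gamma(\theta(p))=p$, and $\theta\circ\tilde\alpha=\alpha$.
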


Now we show that any path in $M(P)$ that connects two different path
components of a fibre $\theta^{-1}(x)$ must pass through $
\theta^{-1}(\nabla(P))$.

\begin{lem}\label{connlemma} Suppose that $x \notin \nabla(P)$ and let $p$ and $q$ be
two realisations of $P$ that lie in different components of
$\theta^{-1}(x)$. Suppose that $\alpha
:[0,1]\rightarrow M(P)$ is a continuous function such that
$\alpha(0) = p$ and $\alpha(1) = q$. Then there is
some $c \in [0,1]$ such that $\theta(\alpha(c)) \in
\nabla(P)$.
\end{lem}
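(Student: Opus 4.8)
The plan is to argue by contradiction using the fibrewise Morse-theoretic structure of $\theta$ described above. Suppose that $\theta(\alpha(c)) \notin \nabla(P)$ for every $c \in [0,1]$. Then the image of the composite $\theta \circ \alpha: [0,1] \rightarrow [P]$ is a connected subset of $[P]$ that avoids $\nabla(P)$ entirely. Since all the nonzero critical values of $\theta$ lie in $\nabla(P)$, and since $0 \in \nabla(P)$ as well (note $0 = a_0$ is a critical value and $\theta^{-1}(0)$ is connected, being a single straight-line configuration, so $0 \in \nabla(P)$), the interval $\text{im}(\theta \circ \alpha)$ contains no critical value of $\theta$ at all. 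Consequently $\text{im}(\theta \circ \alpha)$ is contained in a single open interval $(a_i, a_{i+1})$ of regular values.

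Next I would exploit the product structure over a regular interval. Over the open interval $(a_i, a_{i+1})$, the map $\theta$ restricts to a trivial fibre bundle (there are no critical values and hence no identifications occurring in the interior), so $\theta^{-1}((a_i,a_{i+1})) \cong (a_i,a_{i+1}) \times \Sigma_i$ with $\theta$ corresponding to projection onto the first factor. Because $x \notin \nabla(P)$, the fibre $\theta^{-1}(x)$ is disconnected, so by the stated dichotomy we have $\Sigma_i = (S^1)^{k-2} \sqcup (S^1)^{k-2}$, giving exactly two path components. Under the homeomorphism with $(a_i,a_{i+1}) \times \Sigma_i$, the two path components of $\theta^{-1}(x)$ correspond to the two components $(a_i,a_{i+1}) \times (S^1)^{k-2}$, and $p$ and $q$ lie in distinct ones by hypothesis.

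Finally I would derive the contradiction from connectivity. The entire path $\alpha$ has image contained in $\theta^{-1}(\text{im}(\theta \circ \alpha)) \subseteq \theta^{-1}((a_i,a_{i+1})) \cong (a_i,a_{i+1}) \times \Sigma_i$, which has exactly two path components, and projection to $\Sigma_i$ composed with the projection $\Sigma_i \to \pi_0(\Sigma_i)$ sends each component to a single point. A continuous path in a space cannot join two different path components, so $\alpha(0) = p$ and $\alpha(1) = q$ must lie in the same component; this contradicts the assumption that they lie in different components of $\theta^{-1}(x)$.

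The only genuinely delicate point is the first step: one must confirm that the image of $\alpha$ cannot straddle a critical value while still avoiding $\nabla(P)$. This is precisely guaranteed by the two facts quoted above, namely that every nonzero critical value lies in $\nabla(P)$ and that $0 \in \nabla(P)$. Once one verifies that $\text{im}(\theta \circ \alpha)$ is trapped in a single regular interval, the remainder is a routine application of the triviality of the bundle over that interval together with the elementary observation that continuous images of $[0,1]$ are connected and therefore cannot switch path components.
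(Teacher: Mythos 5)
Your overall strategy is the same as the paper's: use the fibrewise description of $\theta$ to show that a path avoiding $\theta^{-1}(\nabla(P))$ is trapped in a piece over which $\theta$ is a trivial bundle with disconnected fibre, and hence cannot switch components. However, there is a concrete error in your first step. You assert that $0\in\nabla(P)$ because ``$\theta^{-1}(0)$ is connected, being a single straight-line configuration.'' This is false: $\theta^{-1}(0)$ is the set of configurations with $p(s)=p(t)$, i.e.\ the moduli space of the \emph{closed} polygonal linkage with edge lengths $l_1,\dots,l_k$, which by Theorem \ref{polygonal} can be disconnected. For example, for $k=3$ with lengths $3,3,1$ one has $[P]=[0,7]$ and $\nabla(P)=\{1\}\cup[3,7]$, while $\theta^{-1}(0)$ consists of two mirror-image triangles, so $0\in[P]\setminus\nabla(P)$. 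The paper is careful to claim only that the \emph{nonzero} critical values lie in $\nabla(P)$, and $0$ is a critical value by its convention. Consequently your deduction that $\operatorname{im}(\theta\circ\alpha)$ contains no critical value, and therefore sits inside a single open interval $(a_i,a_{i+1})$ of regular values, fails whenever the path enters or approaches $\theta^{-1}(0)$ --- and nothing in the hypotheses prevents this (in the example above, take $x=1/2$ and a path whose $\theta$-image is $[0,1)$).

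The gap is repairable within your framework. If $\operatorname{im}(\theta\circ\alpha)$ avoids $\nabla(P)$ and contains $0$, then since $a_1\in\nabla(P)$ the image is contained in $[0,a_1)$, so the whole path lies in $M_0$ restricted over $[0,a_1)$, which is $\bigl([0,a_1)\times\Sigma_0\bigr)/\!\sim$ with identifications only over $0$. Because $0\notin\nabla(P)$ means precisely that $\theta^{-1}(0)$ is disconnected, those identifications do not join the two components of $\Sigma_0=(S^1)^{k-2}\sqcup(S^1)^{k-2}$, so this set still has exactly two path components separating $p$ from $q$, and your contradiction goes through. The cleanest formulation, implicit in the paper's two-line proof, is that the path must pass through a fibre over which the two local components of $\theta^{-1}$ actually merge; such a fibre is by definition connected, so its $\theta$-value lies in $\nabla(P)$ whether that value is a nonzero critical value or $0$.
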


\begin{proof}
It is clear from the above description of $\theta$ that there must
exist $c$ such that $\alpha(c) = a_j$ for some critical value $a_j$
of $\theta$. However, as remarked above, $a_j \in \nabla(P)$.
\end{proof}

We conclude this section by observing that if $x\notin \nabla(P)$,
if $p \in \theta^{-1}(x)$ and if $\tau$ is any orientation reversing
isometry of  $\mathbb R^2$, then $\tau\circ p $ and $p$ lie in
different path components of $\theta^{-1}(x)$.

\subsection{Determining the connectedness of the moduli space}

Now we present a method for checking the connectedness of $M(L)$
when $L = (G,l)$ is a series parallel linkage. First observe that we
may as well restrict our attention to the case where the graph $G$
is a 2-connected series parallel graph. If $G$ is not 2-connected,
then it can be decomposed into a series composition of 2-connected
series parallel graphs. It is clear that $M(L)$ is connected if and
only the moduli space of each of the series components of $L$ is
connected.

Let $L$ be a 2-connected series parallel linkage such that $M(L)$ is
not empty. Recall that, by Lemma \ref{twoconnSPGlemma}, we can find
vertices $u$ and $v$ such that $(G,u,v)$ is a TTSPG and such that \[
(G,u,v) = (P_1,u,v)\|(P_2,u,v)\| \dots \|(P_n,u,v) \| (K,u,v)\]
where $(K,u,v)$ is a sub TTSPG of $(G,u,v)$ and
where each $P_i$ is a path joining $u$ and $v$, and $n \geq 2$.  See
Figure \ref{parallelpaths} for an illustration of this situation.
\begin{figure}\includegraphics{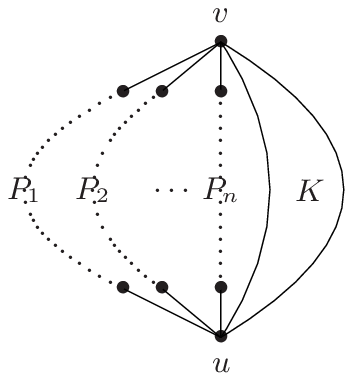}\caption{\label{parallelpaths}}\end{figure}
We will write $L_K$ to denote the sublinkage $(K,l|_{K})$ of $L$.

\begin{thm} \label{SPconone}
With notation as above, if $\nabla(P_i) \cap [L,u,v]$ is empty for
some $i$ then $M(L)$ is disconnected. \end{thm}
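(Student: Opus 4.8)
The plan is to show that if $\nabla(P_i)\cap[L,u,v]$ is empty for some index $i$, then the two path components of the fibre of $\theta_{P_i}$ over each attained distance value separate $M(L)$ into at least two pieces. I would first fix such an $i$ and write $\theta_i:M(P_i)\to[P_i]$ for the corresponding endpoint-distance map. The crucial structural input is the final observation of Section~\ref{sec:lifting}: whenever $x\notin\nabla(P_i)$, the fibre $\theta_i^{-1}(x)$ has exactly two path components, interchanged by any orientation reversing isometry of $\mathbb{R}^2$. So I would define a map on $M(L)$ by restricting a realisation $p$ of $L$ to the sublinkage $P_i$, recording which of the two components of $\theta_i^{-1}(|p(u)-p(v)|)$ the restriction $p|_{P_i}$ lands in. Because every realisation in $M(L)$ restricts to a realisation of $P_i$ whose endpoint distance lies in $[L,u,v]$, and because $[L,u,v]$ is disjoint from $\nabla(P_i)$ by hypothesis, every such restricted fibre genuinely has two components, so this map is well defined.

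The next step is to promote this into a continuous (hence locally constant) $\{0,1\}$-valued invariant on $M(L)$. I would orient the two components of each fibre consistently, say by recording the sign of the signed area of the triangle spanned by $p(u)$, $p(v)$ and some chosen interior vertex $w$ of $P_i$ (noting that since $x\notin\nabla(P_i)$, the configuration of $P_i$ is never a straight line, so this signed area is nonzero and its sign distinguishes the two components). This sign function $\varepsilon:M(L)\to\{\pm1\}$, $\varepsilon(p)=\operatorname{sign}$ of that area, is continuous wherever it is nonzero, and it is nonzero everywhere on $M(L)$ precisely because the restriction to $P_i$ avoids straight-line configurations throughout. A continuous function into $\{\pm1\}$ is locally constant, so it is constant on every connected component of $M(L)$.

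Finally I would exhibit two realisations of $L$ on which $\varepsilon$ takes opposite values, which forces $M(L)$ to be disconnected. Take any $p\in M(L)$ (nonempty by assumption) and apply a global orientation reversing isometry $\tau$ of $\mathbb{R}^2$ to the portion of the realisation along $P_i$ while keeping the rest fixed; more cleanly, reflect the whole realisation $p$ to obtain $\tau\circ p$, which is again in $M(L)$ since edge lengths are preserved. By the cited observation, $p|_{P_i}$ and $(\tau\circ p)|_{P_i}$ lie in different components of the same fibre, so $\varepsilon(p)=-\varepsilon(\tau\circ p)$, and these two points cannot lie in the same component of $M(L)$.

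The main obstacle I anticipate is verifying that $\varepsilon$ is genuinely well defined and continuous as a function on the whole of $M(L)$, rather than just on $M(P_i)$: one must confirm that the endpoint distance $|p(u)-p(v)|$ of \emph{any} realisation $p$ of the full linkage $L$ lands in $[L,u,v]$ and therefore outside $\nabla(P_i)$, so that the two-component structure is available at every point. This is exactly where the hypothesis $\nabla(P_i)\cap[L,u,v]=\emptyset$ is used, together with the fact that $[L,u,v]$ records precisely the distances realisable in $M(L)$; the parallel decomposition guarantees $p|_{P_i}$ is itself a realisation of $P_i$ with the same terminal distance. I would also need to take care that the reflection argument produces a genuinely distinct component and not merely a distinct point, but this follows directly from the constancy of $\varepsilon$ on components established in the previous paragraph.
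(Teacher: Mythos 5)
Your overall strategy is the same as the paper's: produce a second realisation by reflecting (the paper reflects only the vertices of $P_i$ in the line through $p(u)$ and $p(v)$, you reflect everything, but either works), and then show that no path in $M(L)$ can join the two because the terminal distance is confined to $[L,u,v]$, which misses $\nabla(P_i)$. The gap is in your second paragraph, where you try to make the ``which component of the fibre'' invariant concrete via the sign of the area of the triangle $p(u)$, $p(v)$, $p(w)$ for a single chosen interior vertex $w$ of $P_i$. Both claims you make about this function fail once $P_i$ has three or more edges. First, $x\notin\nabla(P_i)$ only guarantees that $p|_{P_i}$ is not a straight-line configuration, i.e.\ that not \emph{all} vertices of $P_i$ are collinear; a particular interior vertex $w$ can still lie on the line through $p(u)$ and $p(v)$. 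Second, and consequently, the sign is not constant on the components of $\theta^{-1}(x)$: for the path with three unit edges and $x=1/2$, the configuration $v_0=(0,0)$, $v_1=(1,0)$, $v_2=(0.75,\sqrt{15}/4)$, $v_3=(0.5,0)$ lies in the fibre over $1/2$, is not a straight line, has $v_1$ on the line $v_0v_3$, and admits nearby configurations in the same component with $v_1$ on either side. So $\varepsilon$ is neither everywhere nonzero nor locally constant, and the separation argument collapses.

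The invariant you actually need is the one you name in your first paragraph — the path component of $\theta^{-1}(|p(u)-p(v)|)$ containing $p|_{P_i}$ — and the correct way to see that it cannot change along a path in $M(L)$ is not a pointwise sign function but the path-lifting/Morse-theoretic structure of $\theta$: any path in $M(P_i)$ joining the two components of a fibre over a point outside $\nabla(P_i)$ must pass through a critical fibre, and all nonzero critical values lie in $\nabla(P_i)$. This is exactly Lemma \ref{connlemma} of the paper, applied to the restriction of the putative path to $P_i$, together with the observation following that lemma that an orientation-reversing isometry swaps the two components. With that substitution your argument becomes the paper's proof; without it, the key continuity step is unjustified.
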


\begin{proof}
Let $p \in M(L) $. We can construct another realisation of $L$ by
reflecting the vertices $P_i$ in a line through $p(u)$ and $p(v)$.
Call this realisation $q$. Now if $\alpha:[0,1]\rightarrow M(L)$ is
any path, then by assumption $|\alpha(x)(u)-\alpha(x)(v)| \notin
\nabla(P_i)$ for all $x \in [0,1]$. Therefore, by Lemma
\ref{connlemma} and the observation that immediately follows that
lemma, $\alpha$ cannot be a path that connects $p$ and $q$.
\end{proof}

What can we say about the connectedness of $M(L)$ if the hypothesis
of Theorem \ref{SPconone} is not satisfied, in other words if
$\nabla(P_i) \cap [L,u,v]$ is non empty for all $i$?

In this case, we construct a path linkage $Q$ as follows. Suppose
that $[P_i] = [a_i,b_i]$ for $i = 1,\dots,n$. Let $a = \max\{a_i\}$.
Let $b= \min\{b_i\}$ (note that $b \geq a$ since $(G,l)$ is
realisable). Now let $Q$ be a path linkage with four edges
$e_1,\dots,e_4$ and assign length $l_i$ to edge $e_i$ as follows;
$l_1 = \frac{a+b}2$ and $l_2=l_3=l_4 = \frac{b-a}6$.

\begin{lem} \label{Qproperty} $[Q]=[a,b]$ and $\nabla(Q) = [a,b]$. \end{lem}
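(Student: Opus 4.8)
The plan is to verify both equalities by a direct substitution of the edge lengths of $Q$, using Theorem \ref{polygonal} for the first claim and Lemma \ref{connectivityrange} for the second. The essential preliminary computation is the total length
\[ S = \frac{a+b}{2} + 3\cdot\frac{b-a}{6} = b, \]
together with the observation that $e_1$ is a longest edge of $Q$, since $\frac{a+b}{2} \geq \frac{b-a}{6}$ holds for all $a,b \geq 0$. I will work under the (implicit) assumption $b>a$, so that the three short edges have nonzero length as required in this section; the case $a=b$ is trivial.

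For the first claim $[Q]=[a,b]$, I would mimic the technique of Lemma \ref{connectivityrange}: adjoin an edge $\{s,t\}$ of variable length $x$ to $Q$ and ask, via Theorem \ref{polygonal}, for which $x$ the resulting polygon (total length $b+x$, longest edge $\max\{\frac{a+b}{2},x\}$) is realisable. When $x \leq \frac{a+b}{2}$ the condition ``longest edge at most half the total'' reads $\frac{a+b}{2} \leq \frac{b+x}{2}$, i.e.\ $x \geq a$; when $x > \frac{a+b}{2}$ it reads $x \leq \frac{b+x}{2}$, i.e.\ $x \leq b$. The union of the admissible ranges is exactly $[a,b]$.

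For the second claim, I would substitute the sorted lengths $l_1=\frac{a+b}{2} \geq l_2=l_3=l_4=\frac{b-a}{6}$ (with $k=4$, $S=b$) into the three intervals of Lemma \ref{connectivityrange}. A short calculation gives them as $[-\frac{b+2a}{3},\frac{b-a}{6}]$, $[\frac{b-a}{6},\frac{a+b}{2}]$ and $[\frac{a+b}{2},b]$. The key point is that these three intervals abut at the shared endpoints $\frac{b-a}{6}$ and $\frac{a+b}{2}$, so their union collapses to the single interval $[-\frac{b+2a}{3},b]$. Intersecting with $[Q]=[a,b]$ and using $-\frac{b+2a}{3} \leq 0 \leq a$ then yields $\nabla(Q)=[a,b]$.

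The computation is entirely routine; the only steps needing care are evaluating the nested $\min$ and $\max$ in Lemma \ref{connectivityrange}, where one checks that $\frac{a+b}{2} \leq \frac{2b+a}{3}$ (equivalent to $a \leq b$) so that $\min\{l_1,S-2l_2\}=l_1$, and that $S-2(l_1+l_2)=-\frac{b+2a}{3}$ is negative so that $\max\{l_1,S-2(l_1+l_2)\}=l_1$. The expected main obstacle is therefore merely bookkeeping: confirming that the three subintervals are contiguous, so that their union is a single interval containing all of $[a,b]$ rather than a disconnected set.
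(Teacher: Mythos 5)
Your proposal is correct, and it follows the same route as the paper: the paper's proof simply asserts that $[Q]=[a,b]$ is obvious and that $\nabla(Q)=[a,b]$ follows immediately from Lemma \ref{connectivityrange}, which is exactly the substitution you carry out in detail (your arithmetic, including $S=b$, the identification of the $\min$ and $\max$, and the contiguity of the three subintervals, checks out). You have merely written out the bookkeeping that the paper leaves to the reader.
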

\begin{proof} The first statement is obvious and
the second statement follows immediately from Lemma
\ref{connectivityrange} \end{proof}

Let $s$ and $t$ be the terminal vertices of $Q$ and define a linkage
$L_1$ by \[( L_1, u,v) = (Q,s,t) \| (K_L,u,v).\] In other words $
L_1$ is obtained from $L$ by replacing all the $P_i$s by the single
path linkage $Q$. Note that $L_1$ has a strictly smaller series
parallel decomposition in terms of path linkages than $L$ does.

\begin{thm} \label{SPcontwo}
With the notation as above, suppose that $\nabla (P_i) \cap [L,u,v]$
is nonempty for each $i=1,\dots,n$. Then $M(L)$ is connected if and
only if $M(L_1)$ is connected.\end{thm}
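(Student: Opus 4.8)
The plan is to reduce the connectedness of both $M(L)$ and $M(L_1)$ to the connectedness of a single space built only from $K$, namely
\[ N := \theta_K^{-1}(I), \qquad I:=[L,u,v], \]
where $\theta_K\colon M(L_K)\to[L_K,u,v]$ records the terminal distance. Iterating the parallel composition lemma gives $[L,u,v]=[a,b]\cap[L_K,u,v]=[L_1,u,v]$, so $L$ and $L_1$ determine the \emph{same} interval $I$ and the \emph{same} subspace $N\subseteq M(L_K)$. I would then deduce the theorem from the two equivalences
\[ M(L)\ \text{connected}\iff N\ \text{connected}\iff M(L_1)\ \text{connected}, \]
the first of which uses the hypothesis $\nabla(P_i)\cap I\neq\emptyset$ and the second of which does not.

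The basic structural observation is that a realisation of $L$ is exactly a choice of realisations of $P_1,\dots,P_n$ and of $L_K$ all assigning the same distance to the terminal pair, and likewise for $L_1$ with $Q$ in place of the $P_i$. This exhibits continuous restriction maps $\pi\colon M(L)\to M(L_K)$ and $\pi_1\colon M(L_1)\to M(L_K)$, both with image exactly $N$ (any configuration of $K$ at a distance $d\in I$ can be completed, since $I\subseteq[P_i]$ and $I\subseteq[Q]$). Surjectivity onto $N$ immediately yields the easy halves: if $M(L)$ (resp.\ $M(L_1)$) is connected then so is its continuous image $N$. It also gives the reverse implication for $L_1$ cheaply, since over each $d\in I$ the fibre of $\pi_1$ is $\theta_Q^{-1}(d)$, which is connected because $I\subseteq[a,b]=\nabla(Q)$ by Lemma \ref{Qproperty}; concretely, to join two realisations of $L_1$ I would first drag the $Q$-part along a lift (Theorem \ref{liftingtheorem3}) of the distance-path induced by a path in $N$ joining the two $K$-parts, and then, the $K$-parts now agreeing, finish inside the single connected fibre $\theta_Q^{-1}(d)$. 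Thus $N$ connected $\Rightarrow M(L_1)$ connected, completing the second equivalence.

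The substance of the proof is the implication $N$ connected $\Rightarrow M(L)$ connected, where the hypothesis enters. Given two realisations of $L$, I would first use Theorem \ref{liftingtheorem3} to lift, in each $M(P_i)$, the distance-path coming from a path in $N$ between the two $K$-configurations; this drags the first realisation to one whose $K$-part agrees with the target, reducing to the case where the two realisations share a common $K$-configuration $p_K$ at distance $d$. It then remains to reconfigure the paths one at a time at fixed $K$-part. To alter $P_i$ while leaving $K$ and every $P_j$ ($j\neq i$) undisturbed, I would choose $c_i\in\nabla(P_i)\cap I$ and, using connectedness of $N$, a loop in $N$ based at $p_K$ whose terminal distance runs out to $c_i$ and back. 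Lifting the induced distance-excursion by Theorem \ref{liftingtheorem3} and then running it in reverse returns each $P_j$ ($j\neq i$) and the $K$-part to its start, while for $P_i$ the excursion passes through the distance $c_i$ at which $\theta^{-1}(c_i)$ is connected; applying Theorem \ref{liftingtheorem2} to the outward and return legs, and using the connected fibre over $c_i$ to switch components, produces a lift realising any prescribed change of $P_i$ at distance $d$. Iterating over $i$ matches all the paths and shows $M(L)$ is connected.

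The main obstacle I anticipate is exactly this last step. An individual path $P_i$ can only have its two components over a generic distance interchanged by excursing to a distance lying in $\nabla(P_i)$, and such an excursion necessarily moves the terminal distance, which forces the $K$-part to move as well. Since $L_K$ is not a path linkage, the lifting theorems of Section \ref{sec:lifting} do not apply to it, so the ability to send the distance out to $c_i$ and back must come from \emph{genuine} connectedness of $N$; pinning down that $\nabla(P_i)\cap I\neq\emptyset$ together with connectedness of $N$ is precisely what makes this excursion available, and carefully tracking the endpoint data for the lifts through Theorem \ref{liftingtheorem2} (after a routine reparametrisation to meet its stationarity hypothesis), is the delicate part. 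I would also need to dispatch the minor technical point that the fibre-product description of $M(L)$ degenerates over the distance $0$, where the normalisation fixing the terminal pair breaks down; this is handled by assembling paths in $C(L)$ directly there and invoking continuity, and it does not affect any of the path-connectedness arguments.
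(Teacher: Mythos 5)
Your proposal is correct and follows essentially the same route as the paper's proof: both rest on the equality of the images of the restriction maps $M(L)\to M(L_K)$ and $M(L_1)\to M(L_K)$, on Theorem \ref{liftingtheorem3} to drag the configurations of the $P_i$ (resp.\ of $Q$) along a motion of the $K$-part, and on Theorem \ref{liftingtheorem2} together with the hypothesis $\nabla(P_i)\cap[L,u,v]\neq\emptyset$ (resp.\ $\nabla(Q)=[Q]$ from Lemma \ref{Qproperty}) to correct endpoints by an excursion of the terminal distance. Your factorisation of both implications through the connectedness of $N=\theta_K^{-1}([L,u,v])$ is a repackaging rather than a different argument, though a tidy one: it makes explicit how the $K$-part moves during the correcting excursion, a point the paper's proof leaves implicit.
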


\begin{proof} We first observe that, by construction, $[Q] = [P_1]\cap \dots
\cap[P_n]$. It follows that \[ \text{im}(M(L)\rightarrow M(L_K)) =
\text{im}(M(L_1)\rightarrow M(L_K))\] where $M(L)\rightarrow M(L_K)$
and $ M(L_1)\rightarrow M(L_K)$ are the canonical maps induced by
restriction.

Now, suppose that $M(L_1)$ is connected and let $p$ and $q$ be
elements of $M(L)$. We must show that there is a path in $M(L)$
joining $p$ and $q$. We construct this path in several stages.
First, by our observations above, we can choose some realisations
$p_1$ and $q_1$ of $L_1$ that agree with $p$ and $q$ on $L_K$. Now
since $M(L_1)$ is connected, there is some path $\alpha:[0,1]
\rightarrow M(L_1)$ such that $\alpha(0) = p_1$ and $\alpha(1) =
q_1$. Now we can apply Theorem \ref{liftingtheorem3} to construct a
path $\tilde\alpha : [0,1] \rightarrow M(L) $ such that
$\tilde\alpha(0) = p$ and $\tilde\alpha$ agrees with $ \alpha$ on
vertices of $K$. We just define $\tilde \alpha (t) (v)  =
\alpha(t)(v)$ for all vertices $v\in K$. To lift $\tilde\alpha$ to
$M(L)$, we use Theorem \ref{liftingtheorem3} (once for each $P_i$).
In particular $\tilde\alpha(1)|_K = q|_K$.

Of course, it may happen that for some or all of the $P_i$s,
$\tilde\alpha(1)|_{P_i}\neq q|_{P_i}$. So we have to concatenate
other paths onto the end of $\tilde\alpha$ to ``correct" it on the
$P_i$s. We can do this one $P_i$ at as time as follows. Let $x =
|q(u)-q(v)|$. By assumption $\nabla (P_i) \cap [L,u,v]$ is nonempty,
so there is some path $\beta:[0,1]\rightarrow [L,u,v]$ such that
$\beta(0) = x = \beta(1)$ and such that $\beta(y) \in \nabla(P_i)$
for some $y \in[0,1]$. Moreover, we can certainly choose $\beta$ so
that it is stationary in a neighbourhood of $0$ and in a
neighbourhood of $1$. Now, it is clear that by applying Theorem
\ref{liftingtheorem2} to $\beta$ we can find some
$\overline\beta:[0,1] \rightarrow M(L)$ such that $\overline\beta(0)
= \tilde\alpha(1)$, $\overline\beta(1)|_{P_i} = q|_{P_i}$ and
$\overline\beta(1)$ agrees with $\tilde\alpha(1)$ for vertices that
are not in $P_i$. Concatenating $\tilde\alpha $ and $\overline\beta$
``corrects" the final position of vertices of $P_i$. We can repeat
this process for all the $P_i$s, if necessary, and we eventually end
up with the required path in $M(L)$ connecting $p$ and $q$.

The converse can be proved in much the same way. Suppose that $M(L)$
is connected and let $p_1$ and $q_1$ be points in $M(L_1)$. We can
find $p$ and $q$ in $M(L)$ that agree with $p_1$ and $q_1$ on $L_K$.
Since $M(L)$ is connected, we can find a path $\alpha :[0,1]
\rightarrow M(L_K)$ such that $\alpha$ connects $p_1|_{L_K}$ and
$q_1|_{L_K}$ and such that $\text{im}(\alpha)$ is contained in the
image of the natural map $M(L) \rightarrow M(L_K)$.  Now, using
Theorem \ref{liftingtheorem3}, we can lift $\alpha$ to a path
$\tilde\alpha:[0,1]\rightarrow M(L_1)$ and using Theorem
\ref{liftingtheorem2} we can correct $\tilde{\alpha}(1)$ so that it
agrees with $q_1$ on $Q$ as necessary. Note that Lemma
\ref{Qproperty} ensures that the hypotheses of Theorem
\ref{liftingtheorem2} are satisfied in this situation.

\end{proof}

Theorems \ref{SPconone} and \ref{SPcontwo} form the basis of a
simple recursive algorithm for deciding whether or not $M(L)$ is connected for
a 2-connected series parallel linkage. We informally describe this
algorithm by the following sequence of steps. We assume that $M(L)$
is nonempty.
\begin{enumerate}
    \item Find a parallel decomposition of the form
    \[ [L,u,v] = (P_1,u,v)\|(P_2,u,v)\|\dots \|(P_n,u,v) \| (K,u,v)\] where $n \geq 2$.
    \item Compute $[L,u,v]$ using the methods described in Section
    \ref{sec:realisability}. Compute $\nabla(P_i)$ for each $i$ using Lemma
    \ref{connectivityrange}.
    \item If $\nabla(P_i) \cap [L,u,v]$ is empty
    for any $i$, then $M(L)$ is not connected and we can stop.
    \item If $\nabla(P_i) \cap [L,u,v]$ is non empty for all $i$, and $K$ is empty then $M(L)$
    is connected and we can stop.
    \item If $\nabla(P_i) \cap [L,u,v]$ is non empty for all $i$, and $K$ is non empty then
    construct the linkage $L_1$ as
    described above and go back to Step (1) with linkage $L_1$ as the input.
\end{enumerate}

\begin{exa}
Let $L$ be the same linkage that we considered in Example
\ref{LinkageExample} and let $P_1$, $P_2$, $P_3$, $P_4$ and $P_5$ be
the sublinkages described earlier. Now observe that \[ (G,c,h) =
(P_2,c,h)\|(P_3,c,h)\|( ((P_4,a,h)\|(P_5,a,h))\circ (P_1,c,a)). \]
It is easy to check (as in Example \ref{LinkageExample}) that \[
[L,c,h] = [3,4]  \] Moreover, $\nabla(P_2) = \{3,5\}$ and
$\nabla(P_3) = \{0,4\}$. Thus, in this case, the hypotheses of
Theorem \ref{SPcontwo} are satisfied. The linkage $L_1$ looks like
\[
\includegraphics{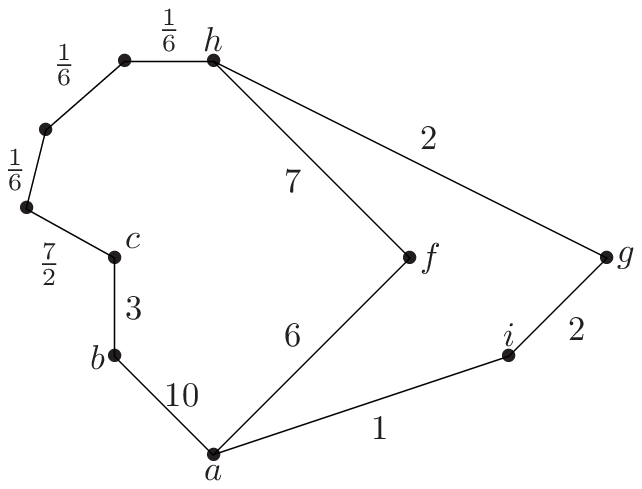}\] Now, one computes that
$[L_1,a,h] = [3,5]$. However, $\nabla(P_4) = \{1,13\}$ which does not
meet $[3,5]$. Therefore, by Theorem \ref{SPconone}, $M( L_1)$ is
disconnected. Therefore by Theorem \ref{SPcontwo}, $M(L)$ is
disconnected.

Observe that the linkage $L_1$ in this example has the property that
every polygonal sublinkage has connected moduli space, but that
$M(L_1)$ is disconnected.
\end{exa}

\subsection{Remarks} We observe that the path lifting results described
in Section \ref{sec:lifting} do not in general hold for linkages
that are not series parallel. In \cite{McLaughlinThesis}, examples
are given to demonstrate this. This is one of the reasons why series
parallel linkages are easier to understand.

We also remark that it is sometimes possible to adapt our methods to
understand linkages that are not series parallel. It may be that a
linkage can be series parallel decomposed into smaller linkages,
which while not themselves series parallel, are amenable to analysis
by other methods. In this case our results may still have some
value. Again, see \cite{McLaughlinThesis} for examples.

\section{Acknowledgements}

We would like to thank Javier Aramayona for many helpful comments
and suggestions.


\begin{bibdiv}
\begin{biblist}

\bib{MR2295049}{article}{
   author={Belk, Maria},
   author={Connelly, Robert},
   title={realisability of graphs},
   journal={Discrete Comput. Geom.},
   volume={37},
   date={2007},
   number={2},
   pages={125--137},
   issn={0179-5376},
   review={\MR{2295049 (2007k:05147)}},
}

\bib{MR0175809}{article}{
   author={Duffin, R. J.},
   title={Topology of series-parallel networks},
   journal={J. Math. Anal. Appl.},
   volume={10},
   date={1965},
   pages={303--318},
   issn={0022-247x},
   review={\MR{0175809 (31 \#85)}},
}

\bib{MR1161075}{article}{
   author={Eppstein, David},
   title={Parallel recognition of series-parallel graphs},
   journal={Inform. and Comput.},
   volume={98},
   date={1992},
   number={1},
   pages={41--55},
   issn={0890-5401},
   review={\MR{1161075 (92m:05180)}},
}

\bib{MR1133898}{article}{
   author={Hausmann, Jean-Claude},
   title={Sur la topologie des bras articul\'es},
   language={French},
   conference={
      title={Algebraic topology Pozna\'n 1989},
   },
   book={
      series={Lecture Notes in Math.},
      volume={1474},
      publisher={Springer},
      place={Berlin},
   },
   date={1991},
   pages={146--159},
   review={\MR{1133898 (93a:57035)}},
}

\bib{MR1614965}{article}{
   author={Hausmann, J.-C.},
   author={Knutson, A.},
   title={The cohomology ring of polygon spaces},
   language={English, with English and French summaries},
   journal={Ann. Inst. Fourier (Grenoble)},
   volume={48},
   date={1998},
   number={1},
   pages={281--321},
   issn={0373-0956},
   review={\MR{1614965 (99a:58027)}},
}

\bib{MR1358807}{article}{
   author={Hendrickson, Bruce},
   title={The molecule problem: exploiting structure in global optimization},
   journal={SIAM J. Optim.},
   volume={5},
   date={1995},
   number={4},
   pages={835--857},
   issn={1052-6234},
   review={\MR{1358807 (96g:90093)}},
}

\bib{MR2455573}{book}{
   author={Farber, Michael},
   title={Invitation to topological robotics},
   series={Zurich Lectures in Advanced Mathematics},
   publisher={European Mathematical Society (EMS), Z\"urich},
   date={2008},
   pages={x+133},
   isbn={978-3-03719-054-8},
   review={\MR{2455573}},
}

\bib{MR1366551}{article}{
   author={Kapovich, Michael},
   author={Millson, John},
   title={On the moduli space of polygons in the Euclidean plane},
   journal={J. Differential Geom.},
   volume={42},
   date={1995},
   number={2},
   pages={430--464},
   issn={0022-040X},
   review={\MR{1366551 (96k:58035)}},
}

\bib{McLaughlinThesis}{thesis}{
    author={McLaughlin, Jonathan},
    title={Moduli spaces associated to weighted graphs},
    status={In preparation},
    type={Ph.D. thesis},
    year={2009},
    institution={National University of Ireland Galway},
}

\bib{MR2076003}{article}{
   author={Milgram, R. James},
   author={Trinkle, J. C.},
   title={The geometry of configuration spaces for closed chains in two and
   three dimensions},
   journal={Homology Homotopy Appl.},
   volume={6},
   date={2004},
   number={1},
   pages={237--267 (electronic)},
   issn={1532-0081},
   review={\MR{2076003 (2005e:55026)}},
}

\bib{MR0163331}{book}{
   author={Milnor, J.},
   title={Morse theory},
   series={Based on lecture notes by M. Spivak and R. Wells. Annals of
   Mathematics Studies, No. 51},
   publisher={Princeton University Press},
   place={Princeton, N.J.},
   date={1963},
   pages={vi+153},
   review={\MR{0163331 (29 \#634)}},
}

\bib{MR849395}{article}{
   author={Oxley, James},
   title={Graphs and series-parallel networks},
   conference={
      title={Theory of matroids},
   },
   book={
      series={Encyclopedia Math. Appl.},
      volume={26},
      publisher={Cambridge Univ. Press},
      place={Cambridge},
   },
   date={1986},
   pages={97--126},
   review={\MR{849395}},
}

\bib{MR2125272}{article}{
   author={Shimamoto, Don},
   author={Vanderwaart, Catherine},
   title={Spaces of polygons in the plane and Morse theory},
   journal={Amer. Math. Monthly},
   volume={112},
   date={2005},
   number={4},
   pages={289--310},
   issn={0002-9890},
   review={\MR{2125272 (2005m:52032)}},
}

\bib{MR652904}{article}{
   author={Valdes, Jacobo},
   author={Tarjan, Robert E.},
   author={Lawler, Eugene L.},
   title={The recognition of series parallel digraphs},
   journal={SIAM J. Comput.},
   volume={11},
   date={1982},
   number={2},
   pages={298--313},
   issn={0097-5397},
   review={\MR{652904 (84d:68073)}},
}

\end{biblist}
\end{bibdiv}


\end{document}